\documentclass[12pt,a4paper]{article}
\usepackage[T1]{fontenc}
\usepackage{lmodern}
\setlength{\oddsidemargin}{0.25in}
\setlength{\evensidemargin}{0.15in}
\setlength{\topmargin}{0.3in}
\setlength{\textwidth}{6.0in}
\setlength{\textheight}{8.5in}
\setlength{\parindent}{0cm}
\setlength{\marginparwidth}{2cm}
\setlength{\parskip}{4mm plus2mm minus3mm}
\usepackage[utf8]{inputenc}
\usepackage{amsthm}
\usepackage{amsmath,amssymb}
\usepackage{graphics}
\usepackage{todonotes}
\usepackage{lscape,graphicx}
\usepackage{enumitem}
\usepackage{amsfonts}
\usepackage[T1]{fontenc}
\usepackage{longtable}
\usepackage{authblk}
\usepackage{lipsum}
\usepackage{epsfig}
\usepackage{float}
\usepackage{hyperref}
\usepackage{comment}
\usepackage[normalem]{ulem}
\usepackage{multicol}
\usepackage{fancyhdr}
\usepackage[english]{babel}
\usepackage{mathrsfs}
\usepackage{tocloft}
\usepackage{xcolor}
\usepackage{titletoc}
\usepackage{mathtools}
\usepackage[toc,page]{appendix}

\addto{\captionsenglish}{}
\makeatletter
\def\thm@space@setup{%
  \thm@preskip=1cm plus .5cm minus .5cm
  \thm@postskip=.5cm plus .6cm minus .5cm 
}

\makeatother


\newtheorem{thm}{Theorem}

\newtheorem{lma}{Lemma}

\newtheorem{cor}{Corollary}

\newtheorem{rmk}{Remark}

\numberwithin{thm}{section}
\numberwithin{lma}{section}
\numberwithin{dfn}{section}
\numberwithin{cor}{section}
\numberwithin{rmk}{section}
\numberwithin{prop}{section}

\newcommand*{\thmref}[1]{Theorem~\ref{#1}}

\newcommand*{\lmaref}[1]{Lemma~\ref{#1}}

\title{Distribution of $\omega(n)$ over $h$-free and $h$-full numbers}

\date{}

\begin{document}

\author{Sourabhashis Das, Wentang Kuo, Yu-Ru Liu}

\newcommand{\Addresses}{{
  \bigskip
  \footnotesize

  Sourabhashis Das (Corresponding author), Department of Pure Mathematics, University of Waterloo, 200 University Avenue West, Waterloo, Ontario, Canada, N2L 3G1. \\
  Email address: \texttt{s57das@uwaterloo.ca}

  \medskip

  Wentang Kuo, Department of Pure Mathematics, University of Waterloo, 200 University Avenue West, Waterloo, Ontario, Canada, N2L 3G1. \\
  Email address: \texttt{wtkuo@uwaterloo.ca}
  
  \medskip

  Yu-Ru Liu, Department of Pure Mathematics, University of Waterloo, 200 University Avenue West, Waterloo, Ontario, Canada, N2L 3G1. \\
  Email address: \texttt{yrliu@uwaterloo.ca}

}}

%
%
%

\maketitle 

\begin{abstract}
Let $\omega(n)$ denote the number of distinct prime factors of a natural number $n$. In \cite{hardyram}, Hardy and Ramanujan proved that $\omega(n)$ has normal order $\log \log n$ over naturals. In this work, we establish the first and the second moments of $\omega(n)$ over $h$-free and $h$-full numbers using a new counting argument and prove that $\omega(n)$ has normal order $\log \log n$ over these subsets.
\end{abstract}

\section{Introduction}

For\footnotetext{\textbf{2020 Mathematics Subject Classification: 11N37, 11N05, 11N56.}} a\footnotetext{\textbf{Keywords: prime factors, prime-counting functions, normal order.}} natural\footnotetext{The research of W. Kuo and Y.-R. Liu are supported by NSERC discovery grants.} number $n$, let the prime factorization of $n$ be given as
\begin{equation}\label{factorization}
n = p_1^{s_1} \cdots p_r^{s_r},
\end{equation}
where $p_i$'s are its distinct prime factors and $s_i$'s are their respective multiplicities. Let $\Omega(n)$ denote the total number of prime factors in the factorization of $n$ and $\omega(n)$ denote the total number of distinct prime factors in the factorization of $n$. Hence $\Omega(n) = \sum_{i=1}^r s_i$ and $\omega(n) =r$. Let $B_1$ be the Mertens constant given by
\begin{equation*}
B_1 = \gamma + \sum_p \left( \log \left( 1 - \frac{1}{p} \right) + \frac{1}{p} \right),
\end{equation*}
with $\gamma \approx 0.57722$, the Euler-Mascheroni constant, and where the sum runs over all primes $p$. Let
\begin{equation*}
B_2 = B_1 + \sum_p \frac{1}{p(p-1)}.
\end{equation*}
The following average value formulas are well-known (see \cite[Theorem 430]{hw} and \cite[Section 1.4.4]{srf}):
\begin{equation}\label{ave_omega}
\sum_{n \leq x} \omega(n) = x \log \log x + B_1 x + O \left( \frac{x}{\log x}\right),
\end{equation}
and
\begin{equation}\label{ave_Omega}
\sum_{n \leq x} \Omega(n) = x \log \log x + B_2 x + O \left( \frac{x}{\log x}\right).
\end{equation}
Let $h \geq 2$ be an integer. Let $n$ be a natural number with the factorization given in \eqref{factorization}. We say $n$ is $h$\textit{-free} if $s_i \leq h-1$ for all $i \in \{1, \cdots, r\}$, and we say $n$ is $h$\textit{-full} if $s_i \geq h$ for all $i \in \{1, \cdots, r\}$. Let $\mathcal{S}_h$ denote the set of $h$-free numbers and $\mathcal{N}_h$ denote the set of $h$-full numbers. Let $\gamma_{0,h}$ be the constant defined as
\begin{equation}\label{gamma0h}
\gamma_{0,h} := \prod_{p} \left( 1 + \frac{p-p^{1/h}}{p^2(p^{1/h} - 1)} \right),
\end{equation}
where the product runs over all primes $p$, and $B_3$ be the constant defined as
\begin{equation}\label{B3}
    B_3 := h (B_2 - \log h) + \sum_p \frac{(h+1)p^{1+1/h} - hp - 2 h p^{2/h} + (2h-1) p^{1/h}}{(p-1) (p^{1/h} - 1) (p^{1+1/h} + p^{1/h} - p)}.
\end{equation}
In \cite[Theorem 1 and Theorem 2]{jala}, Jakimczuk and Lal\'in established the first moments of $\Omega(n)$ over $h$-free and over $h$-full numbers respectively as
\begin{equation}\label{hfreeOmega}
\sum_{\substack{n \leq x \\ n \in \mathcal{S}_h}} \Omega(n) = \frac{1}{\zeta(h)} x \log \log x + O_h(x),
\end{equation}
and
\begin{align}\label{hfullOmega}
\sum_{\substack{n \leq x \\ n \in \mathcal{N}_h}} \Omega(n) = h \gamma_{0,h} x^{1/h} \log \log x + B_3 \gamma_{0,h} x^{1/h}  + O_h \left( \frac{x^{1/h}}{\sqrt{\log x}}\right),
\end{align}
where $\zeta(s)$ represents the classical Riemann $\zeta$-function, and where $O_X$ denotes that the implied big-O constant depends on the variable set $X$. To obtain \eqref{hfreeOmega}, they used the generating Dirichlet series for $h$-free numbers. For \eqref{hfullOmega}, they used the completely additive property of $\Omega(n)$ to split the sum into sums over divisors of $n$ distinguished by their multiplicity, and then used generating series arguments to estimate these sums. 
This method for the $h$-full case cannot be extended to the study of $\omega(n)$, since $\omega(n)$ is not completely additive. This limitation of their approach is presented in \cite[Page 33, Line 15]{lz} where the authors established the function field analog of \cite{jala}. Thus, to study the moments of $\omega(n)$ over $h$-full numbers, we use a new counting argument. At its core, our method relies on counting natural numbers divisible by a given set of prime powers in a bounded range. This new counting argument not only establishes the average distribution for $\omega(n)$ over $h$-free and $h$-full numbers but also improves results from \cite{jala}. In particular, it makes the coefficient of $x$ in \eqref{hfreeOmega} explicit and improves the error term in \eqref{hfullOmega} to $x^{1/h}/\log x$. Additionally, we employ this argument to establish the second moment estimate for $\omega(n)$ over the sets of $h$-free and $h$-full numbers.

%

The first part of our work employs the counting argument to study the moments of $\omega(n)$ over the subsets of $h$-free and $h$-full numbers. As the evidence in \eqref{ave_omega} and \eqref{ave_Omega} suggests, the average distribution of $\omega(n)$ differs from that of $\Omega(n)$ in the second main term. Thus, asymptotically, they behave the same. We verify this phenomenon when restricted to these subsets.

We define the constants
\begin{equation}\label{C1}
    C_1 :=  B_1 - \sum_{p} \frac{p-1}{p(p^h -1)},
\end{equation}
and
\begin{equation*}
    C_2 := C_1^2 + C_1 - \zeta(2) - \sum_p  \left( \frac{p^{h-1} - 1}{p^h - 1} \right)^2.
\end{equation*}
Using the above definitions, we prove the first and the second moments of $\omega(n)$ over $h$-free numbers as the following:
\begin{thm}\label{hfreeomega}
Let $x > 2$ be a real number. Let $h \geq 2$ be an integer. Let $\mathcal{S}_h(x)$ be the set of $h$-free numbers less than or equal to $x$. Then, we have
$$\sum_{n \in \mathcal{S}_h(x)} \omega(n) = \frac{1}{\zeta(h)} x \log \log x + 
 \frac{C_1}{\zeta(h)} x
+ O_h \left( \frac{x}{\log x}\right),$$
and
\begin{align*}
\sum_{n \in \mathcal{S}_h(x)}  \omega^2(n) 
& = \frac{1}{\zeta(h)}  x (\log \log x)^2 + \frac{2 C_1 + 1}{\zeta(h)} x \log \log x + \frac{C_2}{\zeta(h)} x + O_h \left( \frac{x}{\log x}\right).
\end{align*}
\end{thm}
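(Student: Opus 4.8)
The plan is to expand $\omega(n)=\sum_{p\mid n}1$ and reduce both moments to sums, over primes $p$ (and over pairs $p\neq q$), of the number of $h$-free integers up to $x$ that are divisible by $p$ (respectively by $pq$). The fundamental input is a counting estimate of exactly the type advertised in the introduction: writing an $h$-free $n$ with $p\mid n$ uniquely as $n=p^a m$ with $1\le a\le h-1$, $\gcd(m,p)=1$, and $m$ itself $h$-free, and inserting the indicator $\mathbb 1[m\text{ is }h\text{-free}]=\sum_{d^h\mid m}\mu(d)$, I would obtain
\[
S_h(x;p):=\#\{n\le x:\ n\in\mathcal S_h,\ p\mid n\}=\frac{x}{\zeta(h)}\,\frac{p^{h-1}-1}{p^h-1}+O_h\!\big((x/p)^{1/h}\big),
\]
the main term arising because $\frac{1-1/p}{\zeta(h)(1-p^{-h})}\sum_{a=1}^{h-1}p^{-a}=\frac{1}{\zeta(h)}\cdot\frac{p^{h-1}-1}{p^h-1}$. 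Summing over $p\le x$ (no $h$-free multiple of $p$ is $\le x$ once $p>x$), the error contributes $x^{1/h}\sum_{p\le x}p^{-1/h}=O_h(x/\log x)$ by the prime number theorem.

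The first moment then reduces to evaluating $\sum_{p\le x}\frac{p^{h-1}-1}{p^h-1}$. Writing $\frac{p^{h-1}-1}{p^h-1}=\frac1p-\frac{p-1}{p(p^h-1)}$ and applying Mertens' theorem $\sum_{p\le x}\frac1p=\log\log x+B_1+O(1/\log x)$, together with the absolute convergence of $\sum_p\frac{p-1}{p(p^h-1)}$ (whose tail past $x$ is $O(x^{1-h})$), gives $\sum_{p\le x}\frac{p^{h-1}-1}{p^h-1}=\log\log x+C_1+O(1/\log x)$. Multiplying by $x/\zeta(h)$ yields precisely $\frac{1}{\zeta(h)}x\log\log x+\frac{C_1}{\zeta(h)}x+O_h(x/\log x)$.

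For the second moment I would use $\omega^2(n)=\omega(n)+\sum_{p\ne q,\ pq\mid n}1$, so that $\sum_{n\in\mathcal S_h(x)}\omega^2(n)$ is the first moment plus $\sum_{p\ne q}\#\{n\le x:\ n\in\mathcal S_h,\ pq\mid n\}$. The analogous counting estimate, parametrizing $n=p^aq^bm$ with $\gcd(m,pq)=1$, gives $\#\{pq\mid n\}=\frac{x}{\zeta(h)}a_pa_q+(\text{error})$ for $pq\le x$ (the count vanishing for $pq>x$), where $a_p:=\frac{p^{h-1}-1}{p^h-1}$. The task is to evaluate $\sum_{p\ne q,\ pq\le x}a_pa_q$, and the delicate point is that the natural truncation is $pq\le x$ rather than $p,q\le x$. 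I would show
\[
\sum_{\substack{p,q\le x\\ pq>x}}a_pa_q=\int_0^1\frac{-\log(1-u)}{u}\,du+o(1)=\zeta(2)+o(1),
\]
obtained by replacing $a_p$ by $1/p$ in this boundary regime (legitimate since $a_p-\tfrac1p=O(p^{-h})$ and the relevant primes are large), estimating $\sum_{x/p<q\le x}\tfrac1q=-\log\!\big(1-\tfrac{\log p}{\log x}\big)+o(1)$, and passing to an integral via the prime number theorem with $u=\log p/\log x$. Hence
\[
\sum_{\substack{p\ne q\\ pq\le x}}a_pa_q=\Big(\sum_{p\le x}a_p\Big)^2-\zeta(2)-\sum_p a_p^2+o(1)=(\log\log x+C_1)^2-\zeta(2)-\sum_p a_p^2+o(1),
\]
since the diagonal $\sum_{p^2\le x}a_p^2$ converges to $\sum_p a_p^2=\sum_p\big(\tfrac{p^{h-1}-1}{p^h-1}\big)^2$. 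Adding the first moment and collecting terms produces the coefficient $C_1^2+C_1-\zeta(2)-\sum_p\big(\tfrac{p^{h-1}-1}{p^h-1}\big)^2=C_2$ of $x/\zeta(h)$, along with $(\log\log x)^2$ and $(2C_1+1)\log\log x$, matching the claim.

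I expect the main obstacle to be controlling every error to the stated precision $O_h(x/\log x)$ in the second moment. This demands a counting lemma for $\#\{n\le x:\ n\in\mathcal S_h,\ pq\mid n\}$ with an error term summable against the double prime sum, careful treatment of the boundary pairs $pq\approx x$ (including, when $h\ge3$, pairs for which only some exponent configurations $p^aq^b\le x$ survive and the full factor $a_pa_q$ is not attained), and sharpening the $o(1)$ in the $\zeta(2)$ evaluation to $O(1/\log x)$ so that, after multiplication by $x/\zeta(h)$, it remains within the claimed error.
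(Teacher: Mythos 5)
Your proposal is correct and follows essentially the same route as the paper: the same expansion of $\omega$ and $\omega^2$, the same reduction to counting $h$-free integers divisible by $p$ (resp.\ by $pq$) with main term $\frac{x}{\zeta(h)}\cdot\frac{p^{h-1}-1}{p^h-1}$ per prime, and the same diagonal/boundary bookkeeping --- the only difference being that you re-derive in place the two inputs the paper simply cites, namely Jakimczuk--Lal\'in's count of $h$-free numbers coprime to given primes (your M\"obius insertion $\sum_{d^h\mid m}\mu(d)$ is the standard proof of it) and Saidak's evaluation $\sum_{pq\le x}\frac{1}{pq}=(\log\log x)^2+2B_1\log\log x+B_1^2-\zeta(2)+O\left(\frac{\log\log x}{\log x}\right)$, whose $-\zeta(2)$ term is exactly your hyperbola-boundary integral $\int_0^1\frac{-\log(1-u)}{u}\,du$. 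Your closing worry about reaching $O_h(x/\log x)$ in the second moment is apt but is not a defect relative to the paper: the paper's own displayed estimates (its bound $x^{1/h}\sum_{pq\le x}(pq)^{-1/h}\ll_h x\log\log x/\log x$ together with Saidak's error term) in fact yield only $O_h\left(x\log\log x/\log x\right)$ there, matching what your outline would produce.
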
 
Let $\mathcal{L}_h(r)$ be the convergent sum defined for $r > h$ as
\begin{equation}\label{lhr}
     \mathcal{L}_h(r) := \sum_p \frac{1}{p^{(r/h)-1} \left( p - p^{1-1/h} + 1 \right)},
\end{equation}
where the sum runs over all primes. Using this, we define two new constants
\begin{equation}\label{D1}
    D_1 :=  B_1 - \log h + \mathcal{L}_h(h+1) - \mathcal{L}_h(2h),
\end{equation}
and
\begin{equation}\label{D2}
    D_2 := D_1^2 + D_1 - \zeta(2)
     - \sum_{p} \left( \frac{1}{p-p^{1-1/h}+1} \right)^2.
\end{equation}
For the distributions of $\omega(n)$ over $h$-full numbers, we prove:
\begin{thm}\label{hfullomega}
Let $x > 2$ be a real number. Let $h \geq 2$ be an integer. Let $\mathcal{N}_h(x)$ be the set of $h$-full numbers less than or equal to $x$. We have
\begin{align*}
\sum_{n \in \mathcal{N}_h(x)} \omega(n) & =  \gamma_{0,h} x^{1/h} \log \log x + D_1 \gamma_{0,h} x^{1/h} + O_h \left( \frac{x^{1/h}}{\log x} \right),
\end{align*}
and
\begin{align*}
     \sum_{n \in \mathcal{N}_h(x)} \omega^2(n)
     & = \gamma_{0,h} x^{1/h} (\log \log x)^2 +   \left( 2 D_1 + 1 \right) \gamma_{0,h} x^{1/h} \log \log x + D_2 \gamma_{0,h} x^{1/h} \\
     & \hspace{.5cm} + O_h \left( \frac{x^{1/h} \log \log x}{\log x} \right).
\end{align*}
\end{thm}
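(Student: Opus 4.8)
The plan is to reduce both moments to a single combinatorial quantity — the number of $h$-full integers up to a given bound that are divisible by a prescribed set of primes — and then to sum the resulting estimates over primes. The arithmetic input I would use is the counting estimate
$$\#\{n \leq y : n \text{ is } h\text{-full}\} = \gamma_{0,h}\, y^{1/h} + O_h\!\left(y^{1/(h+1)}\right),$$
coming from the Dirichlet series $\prod_p\left(1 + \tfrac{p^{-hs}}{1-p^{-s}}\right)$ of the $h$-full numbers by factoring out $\zeta(hs)$; the counting argument of the earlier sections yields the same asymptotic with the Euler factor at any finite set of primes deleted, uniformly in those primes. Throughout I use that for an $h$-full $n$ the divisibility $p \mid n$ forces $p^h \mid n$, so only primes $p \leq x^{1/h}$ contribute; this is the source of the shift $-\log h$ in $D_1$.

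For the first moment I would write $\omega(n) = \sum_{p \mid n} 1$ and interchange to get $\sum_{n \in \mathcal{N}_h(x)} \omega(n) = \sum_{p \leq x^{1/h}} \#\{n \in \mathcal{N}_h(x) : p \mid n\}$. Decomposing such $n$ as $n = p^a m$ with $a \geq h$, $p \nmid m$ and $m$ $h$-full, and summing the coprime count over $a \geq h$ (a geometric series of ratio $p^{-1/h}$) gives
$$\#\{n \in \mathcal{N}_h(x) : p \mid n\} = \frac{\gamma_{0,h}}{\,p - p^{1-1/h} + 1\,}\, x^{1/h} + O_h\!\left((x/p^h)^{1/(h+1)}\right),$$
where the denominator is the reciprocal of the density of divisibility by $p$ among $h$-full numbers. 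Summing the main term and using the Mertens-type evaluation
$$\sum_{p \leq x^{1/h}} \frac{1}{p - p^{1-1/h}+1} = \log\log x - \log h + B_1 + \mathcal{L}_h(h+1) - \mathcal{L}_h(2h) + O_h\!\left(\tfrac{1}{\log x}\right) = \log\log x + D_1 + O_h\!\left(\tfrac{1}{\log x}\right),$$
in which the convergent correction $\sum_p\bigl(\tfrac{1}{p-p^{1-1/h}+1}-\tfrac1p\bigr)$ equals $\mathcal{L}_h(h+1)-\mathcal{L}_h(2h)$, yields the stated first moment. The error is controlled by $\sum_{p \leq x^{1/h}}(x/p^h)^{1/(h+1)} = O_h\!\left(x^{1/(h+1)}\sum_{p\le x^{1/h}} p^{-h/(h+1)}\right) = O_h\!\left(x^{1/h}/\log x\right)$, the exponent balancing to exactly $1/h$.

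For the second moment I would use $\omega^2(n) = \omega(n) + \omega(n)(\omega(n)-1)$, where $\omega(n)(\omega(n)-1) = \sum_{\substack{p \neq q,\ pq \mid n}} 1$ counts ordered pairs of distinct prime divisors. The crucial move is to reorganize this rather than estimate each $\#\{pq\mid n\}$ separately: for $h$-full $n$ divisible by $q$, writing $n = q^b m$ with $b \geq h$, $q \nmid m$ and $m$ $h$-full, the number of $p \neq q$ dividing $n$ is exactly $\omega(m)$. Hence
$$\sum_{p \neq q} \#\{n \in \mathcal{N}_h(x) : pq \mid n\} = \sum_{q} \sum_{b \geq h} \sum_{\substack{m \leq x/q^b \\ m \text{ $h$-full},\ q \nmid m}} \omega(m),$$
and the innermost sum is precisely the first moment of $\omega$ over $h$-full numbers coprime to $q$. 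Feeding in the coprime-to-$q$ version of the first moment just proven — with $\gamma_{0,h}$ and $D_1$ replaced by their coprime analogues and error $O_h\bigl((x/q^b)^{1/h}/\log(x/q^b)\bigr)$ — and then summing over $b \geq h$ and $q \leq x^{1/h}$ assembles $(\log\log x)^2$, the coefficient $2D_1+1$ (the extra $+1$ coming from the diagonal first moment), and the constant $D_2$.

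The main obstacle is twofold. First, the second moment cannot be handled by the naive product bound $O_h\bigl((x/(p^hq^h))^{1/(h+1)}\bigr)$ summed over two primes, whose exponent $\tfrac{h+2}{h(h+1)}$ exceeds $1/h$; one must instead bootstrap through the already-sharp first moment, which forces that estimate to be proved in a form uniform in the coprimality condition, and the remaining outer sum $\sum_q 1/q$ is exactly what inflates the error to $O_h(x^{1/h}\log\log x/\log x)$. Second, extracting the exact constant $D_2$, and in particular the term $-\zeta(2)$, is delicate: because $pq \mid n$ with $n$ $h$-full requires $p^hq^h \leq x$, i.e. $pq \leq x^{1/h}$, the double sum is genuinely constrained to $pq \le x^{1/h}$ rather than to the square $\{p,q \le x^{1/h}\}$. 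Writing $v = h\log q/\log x$, so that $q \leq x^{1/h}$ becomes $v \leq 1$, the replacement of $\log\log(x^{1/h})$ by $\log\log(x^{1/h}/q)$ contributes $\int_0^{1} \tfrac{\log(1-v)}{v}\,dv = -\zeta(2)$, which together with $D_1^2 + D_1 - \sum_p(\tfrac{1}{p-p^{1-1/h}+1})^2$ is exactly $D_2$. Handling this boundary term and confirming that the secondary counting errors sum to the claimed bounds is the heart of the argument; the remaining manipulations parallel the bookkeeping of Theorem~\ref{hfreeomega}.
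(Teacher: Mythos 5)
Your proposal is correct in substance, and its first half is essentially the paper's argument: the paper also writes $\omega(n)=\sum_{p\mid n}1$, notes $p\mid n$ forces $p^h\mid n$, decomposes $n=p^ky$ with $(y,p)=1$, and feeds in the coprime $h$-full count (\lmaref{Aqhlemma}, proved via the Ivi\'c--Shiu factorization of the generating series), with your geometric-series density $1/(p-p^{1-1/h}+1)$, your Mertens-type evaluation, and your error computation via \lmaref{primepowerleqx} matching \eqref{sum_need}--\eqref{req_bound_p} exactly. For the second moment, however, you take a genuinely different route. The paper stays symmetric: it applies \lmaref{Aqhlemma} with \emph{two} excluded primes to the off-diagonal pair count, reduces to the double sum over the hyperbola $pq\le x^{1/h}$, and imports the $-\zeta(2)$ from \lmaref{saidak} applied at height $x^{1/h}$, with boundary terms $J_1,J_2$ handled by \eqref{PNT} and \lmaref{sumplogp}. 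You instead bootstrap hierarchically, writing $\omega(n)(\omega(n)-1)=\sum_{q\mid n}\omega(m)$ with $n=q^bm$, $q\nmid m$, and summing a coprime-to-$q$ version of the first moment over scales $x/q^b$; the $-\zeta(2)$ then emerges inline from the $\log\log(x/q^b)$ shift via $\int_0^1\log(1-v)/v\,dv=-\zeta(2)$, which is precisely the content of Saidak's lemma re-derived rather than cited. Your route is sound — the identity is exact, the needed uniformity in $q$ is available because it only ever invokes the counting lemma with two excluded primes (the paper's $r=2$ case), the diagonal exclusion $p\ne q$ produces the $-\sum_p(1/(p-p^{1-1/h}+1))^2$ in $D_2$, and the outer $\sum_q 1/(q\log(x^{1/h}/q))$ gives the same $O_h(x^{1/h}\log\log x/\log x)$ via \lmaref{sumplogp} — but it costs you a uniformized first-moment statement that the symmetric approach never needs. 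One claim you make is wrong, though it does not damage your proof: the direct two-prime error bound does \emph{not} fail. Your exponent $\tfrac{h+2}{h(h+1)}$ arises only if one sums $(x/(p^hq^h))^{1/(h+1)}$ over the full square $p,q\le x^{1/h}$; the actual region is the hyperbola $pq\le x^{1/h}$ (since $p^hq^h\mid n\le x$), and summing over it with \lmaref{primepowerleqx} and \lmaref{sumplogp} yields exactly $O_h(x^{1/h}\log\log x/\log x)$ — this is how the paper proceeds in \eqref{hfullp3} — so the bootstrap is an alternative, not a necessity.
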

Next, we introduce the definition of the normal order of an arithmetic function over a subset of natural numbers. This definition is inspired by the work of Elma and Liu \cite{el} over naturals where they extended the definition of normal order using an increasing function (see \cite{hardyram}) to a definition involving a non-decreasing function. Let $S \subseteq \mathbb{N}$ and $S(x)$ denote the set of natural numbers belonging to $S$ and less than or equal to $x$. Let $|S(x)|$ denote the cardinality of $S(x)$. Let $f, F : S \rightarrow \mathbb{R}_{\geq 0}$ be two functions such that $F$ is non-decreasing. Then, $f(n)$ is said to have normal order $F(n)$ over $S$ if for any $\epsilon > 0$, the number of $n \in S(x)$ that do not satisfy the inequality
$$(1-\epsilon) F(n) \leq f(n) \leq (1+ \epsilon) F(n)$$
is $o(|S(x)|)$ as $x \rightarrow \infty$. 

The distribution in \eqref{ave_omega} proves that on average, $\omega(n)$ behaves like $\log \log n$ over rationals. In \cite{hardyram}, Hardy and Ramanujan strengthened this result further by proving that $\omega(n)$ has the normal order $\log \log n$ over natural numbers. In \cite[Section 22.11]{hw}, one can find another proof of this result using the variance of $\omega(n)$ (see \cite[(22.11.7)]{hw}).

Note that $\omega(n)$ may exhibit different normal orders over different subsets of $\mathbb{N}$. For instance, $\omega(p) = 1$ for any prime $p$, and thus $\omega(n)$ has normal order 1 over the set of all primes. However, in this work, we establish that $\omega(n)$ still has the normal order $\log \log n$ over $h$-free and over $h$-full numbers. Since the set of $h$-free numbers has a positive density in $\mathbb{N}$, the proof of normal order of $\omega(n)$ over $h$-free numbers follows from the classical case. In particular, one can establish that for any $\epsilon > 0$, the number of $n \in \mathcal{S}_h(x)$ that do not satisfy the inequality
$$(1-\epsilon) \log \log n \leq \omega(n) \leq (1+ \epsilon) \log \log n$$
is $o(|\mathcal{S}_h(x)|)$ as $x \rightarrow \infty$. On the other hand, the set of $h$-full numbers has density zero in $\mathbb{N}$ and thus does not directly follow from the classical result. However, writing an $h$-full number $n$ as $n = r_0^h r_1$ where $r_0$ is the product of all distinct prime factors of $n$ with multiplicity $h$, one can use the classical result on $\omega(r_0)$ to establish the normal order of $\omega(n)$ over $h$-full numbers. In this work, we provide another proof of this result using the variance of $\omega(n)$ over $h$-full numbers. We prove:
\begin{thm}\label{normal-order-hfull}
    Let $h \geq 2$ be an integer. Let $D_2$ be defined by \eqref{D2} and $\gamma_{0,h}$ be defined by \eqref{gamma0h}. Let $\mathcal{N}_h(x)$ be the set of $h$-full numbers less than or equal to $x$. Then, the variance of $\omega(n)$ over $h$-full numbers is given by
    $$\sum_{n \in \mathcal{N}_h(x)} (\omega(n) - \log \log n) ^2 = \gamma_{0,h} x^{1/h} \log \log x + D_2 \gamma_{0.h} x^{1/h} + O_h \left( \frac{x^{1/h} \log \log x}{\log x} \right).$$
    Let $g(x)$ be an increasing function such that $g(x) \rightarrow \infty$ as $x \rightarrow \infty$. Then the set of natural numbers $n \in \mathcal{N}_h(x)$ such that
    $$\frac{|\omega(n) - \log \log n|}{\sqrt{\log \log n}} \geq g(x),$$
    is $o(|\mathcal{N}_h(x)|)$. As a consequence, $\omega(n)$ has normal order $\log \log n$ over $h$-full numbers.
\end{thm}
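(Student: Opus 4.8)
The plan is to derive the variance formula directly from the two moment estimates of \thmref{hfullomega}, and then to convert it into the concentration statement by a Chebyshev-type inequality. Expanding the square gives
$$\sum_{n\in\mathcal{N}_h(x)}(\omega(n)-\log\log n)^2 = \sum_{n\in\mathcal{N}_h(x)}\omega^2(n) - 2\sum_{n\in\mathcal{N}_h(x)}\omega(n)\log\log n + \sum_{n\in\mathcal{N}_h(x)}(\log\log n)^2.$$
The first sum is supplied verbatim by the second moment in \thmref{hfullomega}. For the remaining two sums, the weight $\log\log n$ must be replaced by $\log\log x$, and I would do this by Abel summation.

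For the cross term, writing $A(t):=\sum_{n\in\mathcal{N}_h(t)}\omega(n)$ and applying Abel summation gives $\sum_{n\in\mathcal{N}_h(x)}\omega(n)\log\log n = A(x)\log\log x - \int_2^x A(t)/(t\log t)\,dt$. Inserting the first-moment estimate of \thmref{hfullomega} (valid uniformly in $t$), the boundary term contributes $\gamma_{0,h}x^{1/h}(\log\log x)^2 + D_1\gamma_{0,h}x^{1/h}\log\log x$, while the integral is $O_h(x^{1/h}\log\log x/\log x)$. An identical computation with the counting function $|\mathcal{N}_h(t)| = \gamma_{0,h}t^{1/h}+O_h(t^{1/(h+1)})$ yields $\sum_{n\in\mathcal{N}_h(x)}(\log\log n)^2 = \gamma_{0,h}x^{1/h}(\log\log x)^2 + O_h(x^{1/h}\log\log x/\log x)$. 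Substituting all three expressions, the $(\log\log x)^2$ terms cancel, the coefficient of $\log\log x$ collapses to $(2D_1+1)-2D_1 = 1$, and the constant term is $D_2$, which is exactly the claimed variance. I expect the main obstacle to be purely analytic: controlling the tails of the integrals $\int_2^x t^{1/h-1}(\log\log t)/\log t\,dt$ so that every error term produced by the two Abel summations is genuinely of size $O_h(x^{1/h}\log\log x/\log x)$ rather than larger.

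For the concentration statement, let $E(x)$ denote the set of $n\in\mathcal{N}_h(x)$ with $|\omega(n)-\log\log n|\geq g(x)\sqrt{\log\log n}$, and split $E(x)$ at $\sqrt{x}$. The $h$-full numbers below $\sqrt{x}$ number $O_h(x^{1/(2h)}) = o(x^{1/h})$ and are negligible. For $n\geq\sqrt{x}$ one has $\log\log n\geq\log\log x-\log 2\geq\tfrac12\log\log x$ for large $x$, so on $E(x)$ we get $(\omega(n)-\log\log n)^2\geq g(x)^2\log\log n\geq\tfrac12 g(x)^2\log\log x$. Comparing with the variance formula just established gives $|\{n\in E(x):n\geq\sqrt{x}\}|\leq (\text{variance})/(\tfrac12 g(x)^2\log\log x)=O_h(x^{1/h}/g(x)^2)$, which is $o(x^{1/h})$ since $g(x)\to\infty$. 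Since $|\mathcal{N}_h(x)|\sim\gamma_{0,h}x^{1/h}$, this shows $|E(x)|=o(|\mathcal{N}_h(x)|)$.

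Finally, the normal order follows by specializing $g$. Given $\epsilon>0$, take $g(x)=\epsilon\sqrt{\tfrac12\log\log x}\to\infty$. Any $n\geq\sqrt{x}$ violating $(1-\epsilon)\log\log n\leq\omega(n)\leq(1+\epsilon)\log\log n$ satisfies $|\omega(n)-\log\log n|>\epsilon\log\log n$, hence $|\omega(n)-\log\log n|/\sqrt{\log\log n}>\epsilon\sqrt{\log\log n}\geq g(x)$, so such $n$ lie in the exceptional set and are $o(|\mathcal{N}_h(x)|)$ by the previous paragraph; the $n<\sqrt{x}$ are negligible as before. Therefore $\omega(n)$ has normal order $\log\log n$ over $\mathcal{N}_h(x)$.
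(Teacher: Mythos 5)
Your proof is correct and is essentially the paper's own argument: you expand the square, evaluate $\sum_{n\in\mathcal{N}_h(x)}\omega(n)\log\log n$ and $\sum_{n\in\mathcal{N}_h(x)}(\log\log n)^2$ by partial (Abel) summation against the moments of \thmref{hfullomega} and the count $|\mathcal{N}_h(x)|=\gamma_{0,h}x^{1/h}+O_h\left(x^{1/(h+1)}\right)$, and then run a Chebyshev-type bound on the exceptional set after discarding the small $h$-full numbers. The only (immaterial) deviations are your cutoff at $\sqrt{x}$ where the paper cuts at $x/\log x$, and your specialization $g(x)=\epsilon\sqrt{\tfrac{1}{2}\log\log x}$ where the paper takes $g(x)=(\log\log x)^{\epsilon'}$ with $0<\epsilon'<1/2$.
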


The counting argument used for $\omega(n)$ can be employed in the study of the distribution of $\Omega(n)$ and produce improvement to \eqref{hfreeOmega} and \eqref{hfullOmega} as mentioned earlier. Let $C_3$ and $C_4$ be two new constants defined as
$$C_3 := B_2 - \sum_p \frac{h}{p^h-1},$$
and
$$C_4 := C_3^2 + C_3 - \zeta(2) - \sum_p  \left( \frac{p^{h} -hp + h - 1}{(p-1)(p^h - 1)} \right)^2.$$
One can improve \eqref{hfreeOmega} as the following:
\begin{thm}
Let $x > 2$ be a real number. Let $h \geq 2$ be an integer. Let $\mathcal{S}_h(x)$ be the set of $h$-free numbers less than or equal to $x$. Then, we have
\begin{equation*}
\sum_{\substack{n \in \mathcal{S}_h(x)}} \Omega(n) = \frac{1}{\zeta(h)} x \log \log x +  \frac{C_3}{\zeta(h)} x
+ O_h \left( \frac{x}{\log x}\right),
\end{equation*}
and
\begin{align*}
\sum_{n \in \mathcal{S}_h(x)}  \Omega^2(n) 
& = \frac{1}{\zeta(h)}  x (\log \log x)^2 + \frac{2 C_3 + 1}{\zeta(h)} x \log \log x + \frac{C_4}{\zeta(h)} x + O_h \left( \frac{x}{\log x}\right).
\end{align*}
\end{thm}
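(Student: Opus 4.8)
The plan is to reproduce, almost verbatim, the counting argument behind \thmref{hfreeomega}, the only change being that the single condition $p\mid n$ which encodes $\omega(n)$ is replaced by a sum over prime powers which encodes $\Omega(n)$. Writing $v_p(n)$ for the $p$-adic valuation and using $v_p(n)\le h-1$ whenever $n$ is $h$-free, we have
\[
\Omega(n)=\sum_{p}v_p(n)=\sum_{p}\sum_{k=1}^{h-1}\mathbf{1}[p^k\mid n],\qquad n\in\mathcal{S}_h .
\]
Exchanging the order of summation, the first moment becomes $\sum_{p}\sum_{k=1}^{h-1}\#\{n\in\mathcal{S}_h(x):p^k\mid n\}$, and each inner count is exactly what the counting input of \thmref{hfreeomega} supplies: writing such an $n$ as $p^j m$ with $k\le j\le h-1$, $p\nmid m$, and $m$ $h$-free, and using $\#\{m\le y:m\in\mathcal{S}_h,\ p\nmid m\}=\tfrac{y}{\zeta(h)}\cdot\tfrac{1-p^{-1}}{1-p^{-h}}+O_h(y^{1/h})$, one obtains
\[
\#\{n\in\mathcal{S}_h(x):p^k\mid n\}=\frac{x}{\zeta(h)}\,\pi_p(k)+O_h\!\Big(\big(x/p^{k}\big)^{1/h}\Big),\qquad \pi_p(k):=\frac{p^{-k}-p^{-h}}{1-p^{-h}} .
\]
Summing $\pi_p(k)$ over $1\le k\le h-1$ produces the local weight $a_p:=\sum_{k=1}^{h-1}\pi_p(k)=\frac{p^{h}-hp+h-1}{(p-1)(p^{h}-1)}$, which is precisely the factor appearing in $C_4$, and $a_p=\tfrac1p+O(p^{-2})$.

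For the first moment I would sum this over $p$ and $k$. Since $p^k\le x$ forces $p\le x$ only when $k=1$ while all the $k\ge 2$ sums converge, the $\log\log x$ comes entirely from the $k=1$ term via Mertens' estimate $\sum_{p\le x}\tfrac1p=\log\log x+B_1+O(1/\log x)$. Writing $\sum_{p\le x}a_p=\sum_{p\le x}\tfrac1p+\sum_p(a_p-\tfrac1p)+O(1/\log x)$, the constant collapses to $C_3$ because $B_1+\sum_p(a_p-\tfrac1p)=B_2-\sum_p\tfrac{h}{p^h-1}$ after the termwise simplification $\tfrac1{p(p-1)}-\tfrac{h}{p^h-1}=\tfrac{p^h-hp^2+hp-1}{p(p-1)(p^h-1)}$. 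The accumulated error $\sum_{p\le x}(x/p)^{1/h}\ll_h x/\log x$ by partial summation, giving the stated $O_h(x/\log x)$.

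For the second moment I would expand $\Omega(n)^2=\sum_p v_p(n)^2+\sum_{p\ne q}v_p(n)v_q(n)$ and use $v_p(n)^2=\sum_{k,k'=1}^{h-1}\mathbf{1}[p^{\max(k,k')}\mid n]$ together with $v_p(n)v_q(n)=\sum_{k,k'=1}^{h-1}\mathbf{1}[p^kq^{k'}\mid n]$. The off-diagonal sum factors over the two primes, so its main term is $\tfrac{x}{\zeta(h)}\sum_{p\ne q}\sum_{k,k'}\pi_p(k)\pi_q(k')$ with the genuine constraint $p^kq^{k'}\le x$; comparing this hyperbola truncation with the product truncation contributes the term $-\zeta(2)$, since the discrepancy is governed by $\sum_{N}w(N)\big(-\log(1-\tfrac{\log N}{\log x})\big)$ over prime powers $N=p^k$ with weights $w(N)=\pi_p(k)$, whose leading contribution is $\int_0^1\frac{-\log(1-u)}{u}\,du=\zeta(2)$. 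Completing the off-diagonal to a full double sum removes the diagonal $p=q$, producing the term $-\sum_p a_p^2$. Together with the single-prime sum $\sum_p v_p(n)^2$, these yield the robust main terms $\tfrac{x}{\zeta(h)}(\log\log x)^2$ and $\tfrac{x}{\zeta(h)}(2C_3+1)\log\log x$ and contribute to the $x$-coefficient $C_4$.

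The main obstacle is the precise evaluation of the $x$-coefficient $C_4$. Unlike the $\omega$ case treated in \thmref{hfreeomega}, where only $k=1$ occurs, here the diagonal term $\sum_p v_p(n)^2$ carries the weighting $\#\{(k,k'):\max(k,k')=m\}=2m-1$, so its contribution is $\sum_p\sum_{m=1}^{h-1}(2m-1)\pi_p(m)$ rather than $\sum_p a_p$; this must be combined carefully with the off-diagonal constant, and the resulting convergent prime sums, together with $-\zeta(2)$ and $-\sum_p a_p^2$, must be reconciled against the closed form $C_4=C_3^2+C_3-\zeta(2)-\sum_p a_p^2$. The second, more routine, difficulty is showing that all the error terms—now a double sum of the shape $\sum_{p,q}\sum_{k,k'}O_h((x/(p^kq^{k'}))^{1/h})$—remain $O_h(x/\log x)$, which again follows from the partial-summation bound used for the first moment. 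Once these are controlled, the identification of the remaining constants is entirely parallel to the proof of \thmref{hfreeomega}.
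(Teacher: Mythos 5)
Your first-moment argument is correct and is exactly the route the paper intends: the paper defers the $\Omega(n)$ proofs to the thesis \cite{das} but advertises them as the same counting argument as \thmref{hfreeomega}, and your version of it checks out in every detail --- the local density $\pi_p(k)=\frac{p^{-k}-p^{-h}}{1-p^{-h}}$ obtained from \lmaref{restrict}, the identity $a_p=\sum_{k=1}^{h-1}\pi_p(k)=\frac{1}{p-1}-\frac{h}{p^h-1}=\frac{p^h-hp+h-1}{(p-1)(p^h-1)}$, the collapse $B_1+\sum_p\bigl(a_p-\tfrac1p\bigr)=C_3$, and the error $\sum_{p\le x}(x/p)^{1/h}\ll_h x/\log x$ via \lmaref{primepowerleqx}.

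The genuine gap is in the second moment, at precisely the step you defer with ``must be reconciled against the closed form $C_4$'': that reconciliation does not close as your setup stands. Your diagonal weight is
\begin{equation*}
b_p:=\sum_{m=1}^{h-1}(2m-1)\,\pi_p(m)=\frac{1-p^{-1}}{1-p^{-h}}\sum_{m=1}^{h-1}\frac{m^2}{p^m},
\end{equation*}
so the diagonal contributes $\frac{x}{\zeta(h)}\bigl(\log\log x+B_1+\sum_p(b_p-\tfrac1p)\bigr)$ up to admissible errors, while your off-diagonal analysis (hyperbola truncation giving $-\zeta(2)$, diagonal removal giving $-\sum_p a_p^2$) contributes $\frac{x}{\zeta(h)}\bigl((\log\log x+C_3)^2-\zeta(2)-\sum_p a_p^2\bigr)$. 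Matching the stated $x$-coefficient $C_4=C_3^2+C_3-\zeta(2)-\sum_p a_p^2$ would force $B_1+\sum_p(b_p-\tfrac1p)=C_3$, i.e.\ $\sum_p(b_p-a_p)=0$. But $b_p-a_p=\frac{1-p^{-1}}{1-p^{-h}}\sum_{m=2}^{h-1}(m^2-m)p^{-m}$ is strictly positive at every prime once $h\ge 3$ (for $h=3$, $p=2$: $b_2=6/7$ versus $a_2=4/7$), so your assembly produces the constant $C_4+\sum_p(b_p-a_p)$, not $C_4$. Structurally, the culprit is the same-prime, different-exponent pairs: writing $\Omega^2=\Omega+\sum_p v_p(v_p-1)+\sum_{p\neq q}v_pv_q$, the middle term is a convergent prime sum of weight $\sum_p(b_p-a_p)$ which the closed form $C_4$ --- the verbatim analogue of $C_2$, built only from the first moment and the distinct-prime pairs --- does not contain. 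For $h=2$ one has $v_p\le 1$, the middle term vanishes identically, and everything degenerates to the $\omega$ case, which is why the difficulty is invisible there. So you cannot wave the last step through as ``entirely parallel'' to \thmref{hfreeomega}: you must either exhibit a compensating term elsewhere in the computation (none is visible in your setup, nor can one be checked against the paper, whose proof is not reproduced there) or conclude that this method yields the theorem only after the diagonal correction $\sum_p(b_p-a_p)$ is accounted for, and confront that discrepancy directly.

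A smaller inaccuracy: your error bound for $\sum_{p\neq q}\sum_{k,k'}(x/(p^kq^{k'}))^{1/h}$ is not $O_h(x/\log x)$ by the first-moment partial summation alone; running \lmaref{primepowerleqx} and \lmaref{sumplogp} as in \eqref{mainpart3} gives $O_h(x\log\log x/\log x)$, and that is the bound you should carry through rather than the stronger one you assert.
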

Recall $B_3$ from \eqref{B3}. Let $B_4$ be a new constant defined as
$$B_4 := B_3^2+ B_3 - h^2 \zeta(2) - \sum_p \left( \frac{h(p^{1/h}-1)+1}{(p^{1/h}-1) (p - p^{1-1/h} +1)} \right)^2.$$
Adopting this definition, one can improve \eqref{hfullOmega} as the following:
\begin{thm}
Let $x > 2$ be a real number. Let $h \geq 2$ be an integer. Let $\mathcal{N}_h(x)$ be the set of $h$-full numbers less than or equal to $x$. Then, we have
\begin{align*}
\sum_{n \in \mathcal{N}_h(x)} \Omega(n) & =   h \gamma_{0,h} x^{1/h} \log \log x + B_3 \gamma_{0,h} x^{1/h} + O_h \left( \frac{x^{1/h}}{\log x} \right),
\end{align*}
and
\begin{align*}
\sum_{n \in \mathcal{N}_h(x)}  \Omega^2(n) 
& =  h^2 \gamma_{0,h} x^{1/h} (\log \log x)^2 + (2 B_3 + 1) h \gamma_{0,h} x^{1/h} \log \log x + B_4 \gamma_{0,h} x^{1/h} \\
& \hspace{.5cm} + O_h \left( \frac{x^{1/h} \log \log x}{\log x} \right).
\end{align*}
\end{thm}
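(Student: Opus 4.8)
The plan is to run the same counting argument that underlies the moments of $\omega(n)$ over $h$-full numbers (\thmref{hfullomega}), but now weighting each prime by its exact multiplicity, which is precisely what the complete additivity of $\Omega$ demands. Since $\Omega$ is completely additive, I would begin by writing $\Omega(n) = \sum_p v_p(n)$, where $v_p(n)$ is the $p$-adic valuation of $n$. For an $h$-full number $n$ the valuation $v_p(n)$ is either $0$ or at least $h$, so interchanging the order of summation gives
\[
\sum_{n \in \mathcal{N}_h(x)} \Omega(n) = \sum_p \sum_{a \geq h} a \cdot \#\{ n \in \mathcal{N}_h(x) : v_p(n) = a \}.
\]
Writing $n = p^a m$ with $p \nmid m$ and $m$ itself $h$-full reduces the inner count to the number of $h$-full numbers coprime to $p$ up to $x/p^a$, which the counting estimate evaluates as $\gamma_{0,h}^{(p)} (x/p^a)^{1/h}$ plus a controlled error, where $\gamma_{0,h}^{(p)}$ is $\gamma_{0,h}$ from \eqref{gamma0h} with its local factor at $p$ removed.

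For the first moment I would then evaluate the inner sum $\sum_{a \geq h} a\, p^{-a/h}$ in closed form; its leading term is $h/p$, so the dominant contribution is $h \gamma_{0,h} x^{1/h} \sum_{p \leq x^{1/h}} 1/p$, and Mertens' theorem turns this into $h\gamma_{0,h} x^{1/h}(\log\log x - \log h + B_1 + \cdots)$. The truncation $p \leq x^{1/h}$ (forced by $p^h \leq x$) is exactly the source of the $-\log h$ inside $B_3$ (see \eqref{B3}), while the subleading terms in the closed form of $\sum_{a\ge h} a\, p^{-a/h}$, together with the local corrections $\gamma_{0,h}/\gamma_{0,h}^{(p)}$, assemble into the convergent sum in \eqref{B3}. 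Matching these against \eqref{hfullOmega} recovers $h\gamma_{0,h}x^{1/h}\log\log x + B_3 \gamma_{0,h} x^{1/h}$, and bounding the accumulated error yields the stated $O_h(x^{1/h}/\log x)$, improving the error term in \eqref{hfullOmega}.

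For the second moment I would expand $\Omega^2(n) = \sum_p v_p(n)^2 + \sum_{p \neq q} v_p(n) v_q(n)$ and treat the two pieces separately. The off-diagonal sum, handled via the counting argument applied to the two-prime count $\#\{n \in \mathcal{N}_h(x) : v_p(n) = a,\ v_q(n) = b\}$, produces the main term; its leading part is $h^2 \gamma_{0,h} x^{1/h}\big(\sum_p 1/p\big)^2$, which yields the $h^2 \gamma_{0,h} x^{1/h}(\log\log x)^2$ term and, through the Mertens expansion, the $(2B_3+1)h\gamma_{0,h}x^{1/h}\log\log x$ term. The diagonal sum $\sum_p \sum_{a\ge h} a^2\,\#\{\cdots\}$ and the correction from completing the restricted double sum to an unrestricted one contribute the remaining constant pieces $-h^2\zeta(2)$ and $-\sum_p(\cdots)^2$ that define $B_4$, while $B_3^2 + B_3$ arises from the self-product of the first-moment structure. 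This mirrors exactly how $D_2$ is built from $D_1$ in the $\omega$ case (compare \eqref{D2}), with the single-prime weight $1$ replaced by the valuation, which accounts for the factors of $h$ and $h^2$.

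The main obstacle will be the uniform error control in the second moment. One must sum the two-prime count over all pairs $(p,a),(q,b)$ with $p^a q^b \leq x$, and the error terms from the counting estimate, once summed over this range, threaten to overwhelm the $x^{1/h}$ main term. Controlling the boundary region where $p^a q^b$ is close to $x$, truncating the prime ranges correctly for each valuation pair, and showing that the total error is only $O_h(x^{1/h}\log\log x/\log x)$ — the extra $\log\log x$ over the first-moment error coming from the second summation over primes — is the delicate part; the first moment and the diagonal of the second moment are comparatively routine once the counting estimate is in hand.
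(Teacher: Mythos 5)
The paper does not actually contain a proof of this theorem (it defers to the thesis \cite{das}), so the right baseline is the template it explicitly points to: the proof of \thmref{hfullomega}, with the counting estimate of \lmaref{Aqhlemma} now weighted by multiplicities. Your overall route is exactly that, and your first moment is sound: the per-prime weight is
\[
u_p \;=\; \frac{1}{1+\frac{p^{-1}}{1-p^{-1/h}}}\sum_{k\ge h}\frac{k}{p^{k/h}} \;=\; \frac{h(p^{1/h}-1)+1}{(p^{1/h}-1)\left(p-p^{1-1/h}+1\right)} \;=\; \frac{h}{p}+O_h\!\left(p^{-(h+1)/h}\right)
\]
(note this is exactly the quantity squared in the definition of $B_4$), and Mertens over $p\le x^{1/h}$ plus convergent corrections give $h\log\log x+B_3$, with the tail $k>\lfloor\log x/\log p\rfloor$ and the secondary terms of \lmaref{Aqhlemma} absorbed into $O_h(x^{1/h}/\log x)$ just as in \eqref{needme} and the estimate following \eqref{hfullfinal}.

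The gap is in the second moment, and it is not where you locate it. The error control you call ``the delicate part'' is routine and identical to \eqref{hfullp2}--\eqref{hfullp3}; what is wrong is your bookkeeping of the diagonal. You claim the off-diagonal produces the full $(2B_3+1)h\,\log\log x$ term while the diagonal $\sum_p v_p(n)^2$ feeds only the constants $-h^2\zeta(2)$ and $-\sum_p(\cdots)^2$ in $B_4$. Both attributions fail. The two constants you mention come from the off-diagonal: $-\zeta(2)$ (scaled by $h^2$) from \lmaref{saidak} applied at $y=x^{1/h}$, and $-\sum_p u_p^2$ from completing $\sum_{p\ne q,\ pq\le x^{1/h}}$ to $\sum_{pq\le x^{1/h}}$, exactly as in \eqref{hfullp4}--\eqref{hfullp5}. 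Writing $u_p=h/p+\delta_p$ with $\sum_p\delta_p$ convergent, the off-diagonal main term evaluates to $h^2(\log\log x)^2+2hB_3\log\log x+B_3^2-h^2\zeta(2)-\sum_p u_p^2$, so the off-diagonal contributes $2hB_3$, not $(2B_3+1)h$, at the $\log\log x$ level. Meanwhile the diagonal is genuinely of size $x^{1/h}\log\log x$: its per-prime weight is $\bigl(1+\frac{p^{-1}}{1-p^{-1/h}}\bigr)^{-1}\sum_{k\ge h}k^2p^{-k/h}=h^2/p+O_h\bigl(p^{-(h+1)/h}\bigr)$, so $\sum_p\sum_{k\ge h}k^2A_{p,h}(x/p^k)=h^2\gamma_{0,h}x^{1/h}\log\log x+O_h(x^{1/h})$. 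This diagonal is precisely where the $\Omega$ case departs from the $\omega$ template: for $\omega$ one has $1^2=1$, so the diagonal collapses to the first moment and supplies the ``$+1$'' in $(2D_1+1)$ of \thmref{hfullomega}; but $v_p^2\ne v_p$, and the same-prime pairs $\sum_p v_p(n)\bigl(v_p(n)-1\bigr)$ are not negligible for $h\ge 2$. As written, your sketch therefore cannot recover the stated coefficients. You should also be aware of a genuine tension here: carrying the diagonal through honestly yields the $\log\log x$ coefficient $2hB_3+h^2=(2B_3+h)h$ rather than $(2B_3+1)h$ — a cross-check is $\Omega(n)=h\,\omega(n)+\sum_{p\mid n}(v_p(n)-h)$, where the excess has first moment $O_h(x^{1/h})$ by comparing the two first moments, so \thmref{hfullomega} forces the $(\log\log x)$-coefficient $h^2(2D_1+1)+2h(B_3-hD_1)=2hB_3+h^2$. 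The stated ``$+1$'' matches a diagonal treated as if it equaled the first moment, which holds only when all multiplicities are $0$ or $1$. At minimum, your write-up must compute $\sum_p\sum_{k\ge h}k^2A_{p,h}(x/p^k)$ explicitly and reconcile the resulting coefficient against the theorem and \cite{das}, rather than waving the diagonal into the constant term.
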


Using the last two theorems and the method of variance demonstrated for $\omega(n)$ in this paper, one can establish the normal order of $\Omega(n)$ over $h$-free and $h$-full numbers as the following:
\begin{cor}
    $\Omega(n)$ has normal order $\log \log n$ over $h$-free numbers and normal order $h \log \log n$ over $h$-full numbers.
\end{cor}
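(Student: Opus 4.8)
The plan is to prove both normal orders by the variance (Turán--Kubilius) method together with Chebyshev's inequality, feeding the first and second moment formulas for $\Omega(n)$ from the two preceding theorems into the same scheme used for $\omega(n)$ in Theorem~\ref{normal-order-hfull}. The $h$-full case is the substantive one. Here the candidate normal order is $h\log\log n$, so I would first establish the variance estimate
\[
\sum_{n \in \mathcal{N}_h(x)} \left( \Omega(n) - h\log\log n \right)^2 = h\,\gamma_{0,h}\, x^{1/h} \log\log x + O_h\!\left( x^{1/h} \right),
\]
which is the $\Omega$-analogue of the variance computed in Theorem~\ref{normal-order-hfull}.

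To obtain this, I would write $\Omega(n) - h\log\log n = (\Omega(n) - h\log\log x) + h(\log\log x - \log\log n)$ and expand the square. The principal term $\sum_{n\in\mathcal{N}_h(x)}(\Omega(n) - h\log\log x)^2$ expands into a combination of $\sum \Omega^2(n)$, $\sum \Omega(n)$, and $|\mathcal{N}_h(x)| = \gamma_{0,h}x^{1/h} + O_h(x^{1/(h+1)})$. Substituting the moment formulas, the coefficients are arranged so that the $(\log\log x)^2$-terms cancel identically, $h^2 - 2h^2 + h^2 = 0$, while the $\log\log x$-coefficient collapses to $(2B_3+1)h - 2hB_3 = h$, leaving the main term $h\gamma_{0,h}x^{1/h}\log\log x$. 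The remaining two pieces are controlled by showing $\sum_{n\in\mathcal{N}_h(x)}(\log\log x - \log\log n)^2 = O_h(x^{1/h})$ --- split the range at $x^{\delta}$, use $\log\log n = \log\log x + O(1)$ for $n\geq x^{\delta}$, and note that the number of $h$-full numbers below $x^{\delta}$ is $O(x^{\delta/h}) = o(x^{1/h})$ --- and then bounding the cross term by $O_h(x^{1/h}\sqrt{\log\log x})$ via Cauchy--Schwarz. Both are negligible against the main term.

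With the variance of order $|\mathcal{N}_h(x)|\log\log x = o\!\left(|\mathcal{N}_h(x)|(\log\log x)^2\right)$ in hand, Chebyshev's inequality shows that for each fixed $\epsilon > 0$ the number of $n\in\mathcal{N}_h(x)$ violating $(1-\epsilon)h\log\log n \leq \Omega(n) \leq (1+\epsilon)h\log\log n$ is $o(|\mathcal{N}_h(x)|)$, which is precisely normal order $h\log\log n$. For the $h$-free case the identical computation with the $h$-free moments and $|\mathcal{S}_h(x)| = x/\zeta(h) + O_h(x^{1/h})$ gives the same cancellation with surviving coefficient $(2C_3+1) - 2C_3 = 1$, hence variance $\sim \frac{1}{\zeta(h)}x\log\log x$ and normal order $\log\log n$; alternatively, since $\mathcal{S}_h$ has positive density $1/\zeta(h)$, the exceptional set from the classical normal order of $\Omega$ over $\mathbb{N}$, being $o(x)$, is automatically $o(|\mathcal{S}_h(x)|)$, so this case follows at once from the classical theorem.

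I expect the only real obstacle to be bookkeeping in the $h$-full case: confirming that the secondary term of the cardinality asymptotic, once multiplied by $(\log\log x)^2$, still lies below the $x^{1/h}$ threshold (it does, since $x^{1/(h+1)}(\log\log x)^2 = o(x^{1/h})$), and managing the replacement of $\log\log n$ by $\log\log x$ uniformly enough that the order-$\log\log x$ main term of the variance is not overwhelmed by the error terms inherited from the second moment formula.
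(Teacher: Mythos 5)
Your proposal is correct and follows essentially the same route the paper indicates for this corollary: feed the first and second moments of $\Omega(n)$ from the two preceding theorems into the variance-plus-Chebyshev scheme of \thmref{normal-order-hfull} (with the $h$-free case alternatively dispatched via positive density and the classical theorem, exactly as the paper does for $\omega$). One cosmetic caveat: your Cauchy--Schwarz bound on the cross term only yields a variance error $O_h\bigl(x^{1/h}\sqrt{\log\log x}\bigr)$ rather than the $O_h\bigl(x^{1/h}\bigr)$ you display, but this does not affect the conclusion since any error $o\bigl(x^{1/h}(\log\log x)^2\bigr)$ suffices for the normal-order deduction.
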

The proofs for the $\Omega(n)$ results are not included in this paper. Interested readers can find the complete proofs in the first author's Ph.D. thesis \cite{das}.
\section{Lemmata}
In this section, we list all the lemmas required to study the first and the second moment of $\omega(n)$ over $h$-free and over $h$-full numbers. First, we recall the following results regarding sums over primes necessary for the study:

\begin{lma}\label{primepower}\cite[Lemma 1.2]{aler}
If $\tau > 1$ be any real number. Then
$$\sum_{p \geq x} \frac{1}{p^\tau} = \frac{1}{(\tau-1)x^{\tau-1} (\log x)} + O \left( \frac{1}{x^{\tau-1} (\log x)^\tau} \right).$$
\end{lma}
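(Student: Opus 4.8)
The plan is to prove this tail estimate for the prime zeta function by partial summation against the prime-counting function $\pi(t) = \sum_{p \leq t} 1$, with the Prime Number Theorem controlling $\pi$. Writing the sum as a Riemann--Stieltjes integral,
$$\sum_{p \geq x} \frac{1}{p^\tau} = \int_{x^-}^\infty \frac{d\pi(t)}{t^\tau},$$
I would substitute the Prime Number Theorem in the form $\pi(t) = \operatorname{Li}(t) + E(t)$, where $\operatorname{Li}(t) = \int_2^t du/\log u$ and $E(t) \ll t\exp(-c\sqrt{\log t})$ is the classical error term. This splits the sum into a smooth main integral against $d\operatorname{Li}(t) = dt/\log t$ and a remainder integral against $dE(t)$.

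The main integral is
$$\int_x^\infty \frac{dt}{t^\tau \log t},$$
which I would evaluate by a single integration by parts with $u = 1/\log t$ and $dv = t^{-\tau}\,dt$. This extracts the leading term $\tfrac{1}{(\tau-1)x^{\tau-1}\log x}$ and leaves a remainder of the shape $\tfrac{1}{\tau-1}\int_x^\infty dt/(t^\tau(\log t)^2)$; one further integration by parts shows this remainder is $O\big(x^{1-\tau}(\log x)^{-2}\big)$, which accounts for the error term. Equivalently, the substitution $u = (\tau-1)\log t$ turns the integral into the exponential integral $E_1\big((\tau-1)\log x\big)$, and the main term together with the size of the first correction can be read directly off its standard asymptotic expansion.

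It then remains to verify that the Prime Number Theorem error $E(t)$ contributes negligibly. Integrating $\int_{x^-}^\infty t^{-\tau}\,dE(t)$ by parts and inserting $E(t) \ll t\exp(-c\sqrt{\log t})$ bounds its contribution by $\ll x^{1-\tau}\exp(-c\sqrt{\log x})$, which is smaller than $x^{1-\tau}(\log x)^{-N}$ for every fixed $N$ and is therefore swallowed by the stated error. Assembling the main term with these two estimates yields the claimed formula.

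The step I expect to be the main obstacle is the error-term bookkeeping rather than the extraction of the main term: one has to confirm that the secondary integral from the integration by parts is genuinely of the claimed order uniformly in the range of $\tau$, and that the super-polynomially small Prime Number Theorem error is handled cleanly so that it cannot spoil the $x^{1-\tau}(\log x)^{-\tau}$ threshold. The main term itself is dictated by the $E_1$-asymptotic and needs no ingenuity; the only substantive external input is the Prime Number Theorem with a de la Vall\'ee-Poussin-type error term.
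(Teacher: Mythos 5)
Your proposal is, in substance, the right and standard proof, but note first that the paper itself contains no proof of \lmaref{primepower}: the lemma is quoted directly from Alladi--Erd\H{o}s \cite[Lemma 1.2]{aler}. So there is nothing in the paper to diverge from; your partial-summation argument against $\pi(t)$ with the Prime Number Theorem is exactly the device the paper does use for the companion estimate \lmaref{primepowerleqx}, and your treatment of the pieces is sound: the $d\mathrm{Li}$ part reduces to $E_1\big((\tau-1)\log x\big)$ as you say, and the PNT-error contribution, after one integration by parts, is indeed $\ll_\tau x^{1-\tau}\exp(-c'\sqrt{\log x})$, negligible against any power of $\log x$.

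The one genuine defect is your closing claim that the secondary term $O_\tau\big(x^{1-\tau}(\log x)^{-2}\big)$ ``accounts for the error term.'' That implication holds only for $1<\tau\leq 2$, and for $\tau>2$ it cannot be repaired by better bookkeeping, because the stated error term is then actually unattainable: from $E_1(z)=\frac{e^{-z}}{z}\left(1-\frac{1}{z}+O(z^{-2})\right)$ with $z=(\tau-1)\log x$ one gets
$$\sum_{p\geq x}\frac{1}{p^{\tau}} \ = \ \frac{1}{(\tau-1)x^{\tau-1}\log x} \ - \ \frac{1+o(1)}{(\tau-1)^2\, x^{\tau-1}(\log x)^{2}},$$
so the deviation from the main term is genuinely of order $(\log x)^{-2}$, which dominates the printed $(\log x)^{-\tau}$ once $\tau>2$. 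In other words, your own flagged worry about uniformity in $\tau$ is precisely where the statement as printed breaks: the correct error term, valid for every fixed $\tau>1$ (with implied constant depending on $\tau$), is $O_\tau\big(x^{1-\tau}(\log x)^{-2}\big)$, which is what your argument proves, and which coincides with or improves the printed form exactly when $\tau\leq 2$. This discrepancy is immaterial downstream: the paper invokes the lemma only with $\tau=h$ and only for leading-order tail bounds of the shape $x^{1-h}/\log x$, for which a $(\log x)^{-2}$ error is ample. So your proof should be regarded as establishing the corrected statement, and a final sentence noting the restriction $\tau\leq 2$ (or replacing the error exponent $\tau$ by $2$) is needed to make it honest.
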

\begin{lma}\label{primepowerleqx}
Let $\alpha$ be any real number satisfying $0 < \alpha < 1$. Then
$$\sum_{p \leq x} \frac{1}{p^\alpha} = O_\alpha \left( \frac{x^{1-\alpha}}{\log x} \right).$$
\end{lma}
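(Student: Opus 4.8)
For $0 < \alpha < 1$,
$$\sum_{p \leq x} \frac{1}{p^\alpha} = O_\alpha\left(\frac{x^{1-\alpha}}{\log x}\right).$$

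Let me think about how to prove this.The plan is to reduce the prime sum to the prime-counting function $\pi(t) := \#\{p \leq t : p \text{ prime}\}$ via partial summation, and then to insert the Chebyshev upper bound $\pi(t) \ll t/\log t$ (this elementary bound suffices; the full Prime Number Theorem is not needed). Taking $f(t) = t^{-\alpha}$ and applying Abel summation to $\sum_{p \le x} f(p)$, with $f'(t) = -\alpha t^{-\alpha - 1}$, gives
$$\sum_{p \leq x} \frac{1}{p^\alpha} = \frac{\pi(x)}{x^\alpha} + \alpha \int_2^x \frac{\pi(t)}{t^{\alpha+1}} \, dt.$$
The boundary term is handled immediately: by Chebyshev, $\pi(x)/x^\alpha \ll x^{1-\alpha}/\log x$, which is already of the desired order. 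Thus the whole problem reduces to bounding the integral.

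Substituting $\pi(t) \ll t/\log t$ reduces the integral to estimating $\int_2^x t^{-\alpha}/\log t \, dt$. The one genuinely delicate point is the $1/\log t$ weight, which has no elementary antiderivative against $t^{-\alpha}$, so I cannot simply integrate in closed form. My plan is to exploit the monotonicity of $\log t$ by splitting the range at $\sqrt{x}$. On $[\sqrt{x}, x]$ I bound $\log t \geq \tfrac12 \log x$ and pull this constant out, leaving $\int t^{-\alpha}\,dt \ll_\alpha x^{1-\alpha}$; hence this piece contributes $O_\alpha(x^{1-\alpha}/\log x)$, exactly the target. On $[2, \sqrt{x}]$ I bound $\log t \geq \log 2$, so this piece is $\ll_\alpha (\sqrt{x})^{1-\alpha} = x^{(1-\alpha)/2}$.

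Finally I would observe that since $1 - \alpha > 0$ we have $(1-\alpha)/2 < 1 - \alpha$, and indeed $\log x = o(x^{(1-\alpha)/2})$, so the contribution $x^{(1-\alpha)/2}$ from the lower range is $o(x^{1-\alpha}/\log x)$ and is absorbed into the main term. Collecting the boundary term together with both halves of the integral yields the claimed bound $O_\alpha(x^{1-\alpha}/\log x)$. I expect the only real obstacle to be the treatment of the $1/\log t$ factor; the split at $\sqrt{x}$ is the standard device for this, and everything else is routine. It is worth emphasizing that the $1/\log x$ saving genuinely relies on the sparseness of the primes: the trivial estimate $\sum_{p \le x} p^{-\alpha} \le \sum_{n \le x} n^{-\alpha} \ll_\alpha x^{1-\alpha}$ loses precisely this factor, so the use of Chebyshev's bound is essential.
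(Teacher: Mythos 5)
Your proof is correct and takes essentially the same route as the paper's: Abel summation reducing the sum to $\pi(x)/x^{\alpha} + \alpha \int_2^x \pi(t)\,t^{-\alpha-1}\,dt$, followed by a bound on the integral. The only differences are that you spell out the estimate the paper compresses into a single $\ll_\alpha$ step (splitting the integral at $\sqrt{x}$ to handle the $1/\log t$ weight, with the lower range contributing only $O_\alpha(x^{(1-\alpha)/2})$) and that you note Chebyshev's elementary bound $\pi(t) \ll t/\log t$ suffices where the paper invokes the full prime number theorem.
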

\begin{proof}
Recall that the classical prime number theorem yields
\begin{equation}\label{PNT}
    \pi(x) := \sum_{p \leq x} 1 = \frac{x}{\log x} + O \left( \frac{x}{(\log x)^2}\right).
\end{equation}
Thus, using partial summation and integration by parts, we obtain
\begin{align*}
    \sum_{p \leq x} \frac{1}{p^\alpha} & = \frac{\pi(x)}{x^\alpha} + \alpha \int_{2^-}^{x} \frac{\pi(t)}{t^{\alpha+1}} dt \ll_{\alpha} \frac{x^{1-\alpha}}{\log x},
\end{align*}
which completes the proof.
\end{proof}
\begin{lma}\label{sumplogp}\cite[Exercise 9.4.4]{murty}
    For $x > 2$, we have
    $$\sum_{p \leq x/2} \frac{1}{p \log (x/p)} = O \left( \frac{\log \log x}{\log x} \right).$$
\end{lma}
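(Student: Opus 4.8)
The plan is to split the range of summation at $p=\sqrt{x}$ and to exploit a different feature of the summand in each piece. For the small primes $p\le\sqrt{x}$ the denominator is already large: since $x/p\ge\sqrt{x}$ we have $\log(x/p)\ge\tfrac12\log x$, so the factor $1/\log(x/p)$ itself supplies the saving $1/\log x$. Pulling it out and invoking Mertens' theorem in the form $\sum_{p\le y}1/p=\log\log y+O(1)$ gives
\[
\sum_{p\le\sqrt{x}}\frac{1}{p\,\log(x/p)}\le\frac{2}{\log x}\sum_{p\le\sqrt{x}}\frac{1}{p}=O\!\left(\frac{\log\log x}{\log x}\right),
\]
which is already within the claimed bound.

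The remaining range $\sqrt{x}<p\le x/2$ is the main obstacle, because there $\log(x/p)$ can be as small as $\log 2$ and so provides no saving on its own; instead the saving must come from the sparsity of large primes. I would handle this by a dyadic decomposition according to the size of $x/p$. For each integer $k\ge1$, collect the primes with $2^k\le x/p<2^{k+1}$, that is $x/2^{k+1}<p\le x/2^k$; the constraint $p>\sqrt{x}$ forces $1\le k\le\tfrac12\log_2 x$. On such a block one has $\log(x/p)\ge k\log 2$ and $1/p<2^{k+1}/x$, while the prime number theorem \eqref{PNT} (or Chebyshev's bound) yields at most $\pi(x/2^k)=O\!\left(\frac{x/2^k}{\log x}\right)$ primes there, using $\log(x/2^k)\ge\tfrac12\log x$. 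Multiplying the number of primes by the maximal size of a summand collapses the $2^k$ and $x$ factors and leaves a contribution of $O\!\left(\frac{1}{k\log x}\right)$ per block.

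Summing over the blocks then produces
\[
\sum_{\sqrt{x}<p\le x/2}\frac{1}{p\,\log(x/p)}=\sum_{k=1}^{\lfloor(\log_2 x)/2\rfloor}O\!\left(\frac{1}{k\log x}\right)=O\!\left(\frac{\log\log x}{\log x}\right),
\]
where the extra factor $\log\log x$ is exactly the partial harmonic sum $\sum_{k\le(\log_2 x)/2}1/k=\log\log x+O(1)$, and combining the two ranges gives the stated estimate. The point demanding care is precisely this large-prime range: the dyadic bookkeeping is essential, since naively bounding $\log(x/p)$ below by $\log 2$ there would only return $O(1)$ and destroy the saving. An alternative route is to apply partial summation against \eqref{PNT}, reducing the sum to $\int_2^{x/2}\frac{dt}{t\,\log(x/t)\,\log t}$; the substitution $u=\log t$ together with the partial fractions $\frac{1}{u(\log x-u)}=\frac{1}{\log x}\!\left(\frac{1}{u}+\frac{1}{\log x-u}\right)$ evaluates this to $O(\log\log x/\log x)$, but then controlling the error term coming from the prime number theorem makes the bookkeeping slightly more delicate than in the dyadic argument.
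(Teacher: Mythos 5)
Your proof is correct, but note that the paper itself offers no argument here at all: Lemma~\ref{sumplogp} is quoted verbatim from \cite[Exercise 9.4.4]{murty}, so any complete proof is ``different from the paper's'' by default. Your two-range argument checks out: for $p\le\sqrt{x}$ the bound $\log(x/p)\ge\tfrac12\log x$ together with Mertens gives $O(\log\log x/\log x)$, and in the range $\sqrt{x}<p\le x/2$ the dyadic blocks $x/2^{k+1}<p\le x/2^k$ with $1\le k\le\tfrac12\log_2 x$ are handled correctly — on each block $\log(x/p)\ge k\log 2$, $1/p<2^{k+1}/x$, and $\pi(x/2^k)\ll (x/2^k)/\log x$ (valid since $x/2^k\ge\sqrt{x}$ forces $\log(x/2^k)\ge\tfrac12\log x$), so each block contributes $O(1/(k\log x))$ and the harmonic sum over $k$ supplies exactly the $\log\log x$ factor. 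You are also right that this bookkeeping is the crux: the crude bound $\log(x/p)\ge\log 2$ on the large primes only yields $O(1)$. Two remarks on what your route buys compared with the standard solution you sketch at the end: the dyadic version needs only a Chebyshev-type upper bound $\pi(y)\ll y/\log y$, not the prime number theorem with error term, so it is more elementary and avoids the delicate error-term control that partial summation against \eqref{PNT} would require; the partial-summation route, via $u=\log t$ and the partial fractions $\tfrac{1}{u(\log x-u)}=\tfrac{1}{\log x}\bigl(\tfrac1u+\tfrac1{\log x-u}\bigr)$, gives the sharper information that the sum is genuinely of order $\log\log x/\log x$ rather than merely bounded by it, but that precision is not needed for the applications in this paper (the estimates \eqref{mainpart3}, \eqref{hfullp3}, and the bounds on $J_1$), where only the upper bound enters.
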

\begin{lma}\label{saidak}
  Let $p$ and $q$ denote prime numbers. For $x >2$, we have
\begin{equation*}
    \sum_{\substack{p,q \\ pq \leq x}} \frac{1}{pq} = (\log \log x)^2 + 2 B_1 \log \log x + B_1^2 - \zeta(2) + O \left( \frac{\log \log x}{\log x} \right).
\end{equation*}
\end{lma}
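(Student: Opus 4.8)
The plan is to reduce the double sum to an iterated sum over primes and apply Mertens' theorem $\sum_{p \le t} 1/p = \log\log t + B_1 + O(1/\log t)$ (the constant $B_1$ being exactly the Mertens constant of the introduction). Since $q \ge 2$, the condition $pq \le x$ forces $p \le x/2$, so writing the sum over ordered pairs gives $\sum_{pq\le x} 1/(pq) = \sum_{p \le x/2}(1/p)\sum_{q\le x/p}1/q$. Applying Mertens to the inner sum, the contribution of its error term is $\sum_{p\le x/2}(1/p)\,O(1/\log(x/p)) = O\bigl(\sum_{p\le x/2}1/(p\log(x/p))\bigr)$, which is precisely $O(\log\log x/\log x)$ by \lmaref{sumplogp} --- this is the role that lemma plays. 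It then remains to evaluate the two main pieces $S_1 := \sum_{p\le x/2}(1/p)\log\log(x/p)$ and $B_1\sum_{p\le x/2}1/p$.

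The second piece is immediate from Mertens together with $\log\log(x/2) = \log\log x + O(1/\log x)$, yielding $B_1\log\log x + B_1^2 + O(1/\log x)$. For $S_1$ I would use partial summation against $M(t) := \sum_{p\le t}1/p$ with the smooth weight $f(t) := \log\log(x/t)$, whose derivative is $f'(t) = -1/(t\log(x/t))$. Substituting $M(t) = \log\log t + B_1 + E(t)$ with $E(t)\ll 1/\log t$, the term carrying $E(t)$ produces an integral bounded by $\int_2^{x/2} dt/(t\log t\,\log(x/t))$, again $O(\log\log x/\log x)$. The boundary contributions and the $B_1$-part of the resulting integral combine so that a spurious $(\log\log 2)(\log\log x)$ cancels, leaving the single integral $I := \int_{\log 2}^{\log x - \log 2} \log(\log x - u)/u\, du$ to analyze.

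To evaluate $I$, I would write $\log(\log x - u) = \log\log x + \log(1 - u/\log x)$ and substitute $v = u/\log x$. The first term contributes $(\log\log x)^2$ up to the cancelling $\log\log 2$ factor, while the second becomes $\int_{(\log 2)/\log x}^{\,1-(\log 2)/\log x} \log(1-v)/v\, dv$, which tends to the dilogarithmic integral $\int_0^1 \log(1-v)/v\, dv = -\zeta(2)$; the two endpoint truncations contribute $O(\log\log x/\log x)$. This identity is the source of the $-\zeta(2)$ in the statement. Assembling the pieces gives $S_1 = (\log\log x)^2 + B_1\log\log x - \zeta(2) + O(\log\log x/\log x)$, and adding the second piece produces $(\log\log x)^2 + 2B_1\log\log x + B_1^2 - \zeta(2) + O(\log\log x/\log x)$, as claimed.

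The main obstacle is the careful evaluation of $S_1$: one must track the boundary terms of the partial summation to witness the cancellation of the $\log\log 2$ contributions, and then justify passing to the convergent integral $\int_0^1 \log(1-v)/v\,dv = -\zeta(2)$, controlling the truncation near $v=1$ where $\log(1-v)$ is singular but integrable (the truncated tail being $O((\log\log x)/\log x)$). Everything else is routine bookkeeping built on Mertens' theorem and \lmaref{sumplogp}. As an alternative route to the constant, one could expand $\log(1-\log p/\log x) = -\sum_{k\ge 1}(\log p)^k/(k(\log x)^k)$ and use $\sum_{p\le y}(\log p)^k/p = (\log y)^k/k + O((\log y)^{k-1})$, which exhibits $\zeta(2) = \sum_{k\ge 1}1/k^2$ directly.
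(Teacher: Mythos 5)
Your proof is correct and follows essentially the same route as the paper: the paper's proof of \lmaref{saidak} consists of citing Saidak's elementary proof of Delange's theorem and correcting its constant via the identity $\int_0^1 \log(1-t)/t \, dt = -\zeta(2)$, and that underlying argument is exactly your computation --- Mertens' theorem applied to the inner sum over $q$, \lmaref{sumplogp} absorbing the error terms, and partial summation reducing the main term to the dilogarithmic integral. In effect you have written out in full, with the correct sign $-\zeta(2)$, the proof the paper delegates to the reference, including the cancellation bookkeeping it leaves implicit.
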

\begin{proof}
    The proof follows the exact arguments of Saidak \cite[Lemma 3]{saidak} except for a small correction. We notice that $\int_{0}^1 \log(1-t)/t \ dt = -\zeta(2)$ which proves the value of $- \zeta(2)$ in the main result as opposed to $+ \zeta(2)$ stated by Saidak.
\end{proof}
Finally, we recall the following results regarding the density of a particular sequence of $h$-free numbers:
\begin{lma}\label{restrict}\cite[Lemma 3]{jala2}
Let $x > 2$ be a real number. Let $h \geq 2$ be an integer. Let $\mathcal{S}_h(x)$ be the set of $h$-free numbers less than or equal to $x$. Let $q_1,\cdots,q_r$ be prime numbers. Then, we have
$$\sum_{\substack{n \in \mathcal{S}_h(x) \\ (q_i,n) = 1, \ \forall i \in \{1, \cdots, r \} }} 1 =  \prod_{i=1}^r \left( \frac{q_i^{h} - q_i^{h-1}}{q_i^h - 1} \right) \frac{x}{\zeta(h)} + O_h \left( 2^r x^{1/h} \right).$$
\end{lma}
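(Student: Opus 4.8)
The plan is to reduce the count to a Möbius/inclusion–exclusion computation carried out in two stages: first detect $h$-freeness by the identity $\mathbf{1}_{h\text{-free}}(n) = \sum_{d^h \mid n} \mu(d)$, and then detect coprimality to $q_1, \ldots, q_r$ by a Legendre (Eratosthenes) sieve over the squarefree divisors of $Q := q_1 \cdots q_r$ (we may assume the $q_i$ are distinct, since repetitions leave the coprimality condition unchanged). Writing the sum as $\sum_{n \le x,\ (n,Q)=1} \sum_{d^h \mid n} \mu(d)$ and interchanging the order of summation, I would set $n = d^h k$; since $d \mid n$, the condition $(n,Q)=1$ forces $(d,Q)=1$ and restricts $k$ to integers coprime to $Q$. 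This turns the expression into
$$\sum_{\substack{d \le x^{1/h} \\ (d,Q)=1}} \mu(d) \sum_{\substack{k \le x/d^h \\ (k,Q)=1}} 1,$$
the outer range being $d \le x^{1/h}$ because $d^h \le n \le x$.

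For the inner sum I would apply inclusion–exclusion over divisors $e \mid Q$, giving $\sum_{e \mid Q} \mu(e) \lfloor x/(d^h e)\rfloor = \frac{x}{d^h}\prod_{i=1}^r (1 - 1/q_i) + O(2^r)$, where the error counts the $2^r$ subset-sums each contributing $O(1)$ from the floor. Substituting back, the main term factors as $x \prod_{i=1}^r (1 - 1/q_i) \sum_{(d,Q)=1} \mu(d)/d^h$, and completing the $d$-sum to infinity introduces a tail bounded by $x \sum_{d > x^{1/h}} d^{-h} = O_h(x^{1/h})$. The completed sum is the Euler product $\prod_{p \nmid Q}(1 - p^{-h}) = \zeta(h)^{-1} \prod_{i=1}^r (1 - q_i^{-h})^{-1}$, so the main term collapses to $\frac{x}{\zeta(h)} \prod_{i=1}^r \frac{1 - 1/q_i}{1 - 1/q_i^h}$; multiplying numerator and denominator of each factor by $q_i^h$ rewrites it as $(q_i^h - q_i^{h-1})/(q_i^h - 1)$, which is exactly the claimed shape.

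Finally I would collect the two error contributions: the tail from extending the Dirichlet series, bounded by $O_h(x^{1/h})$ uniformly since $\prod_{i}(1 - 1/q_i) \le 1$, and the sieve error $O(2^r)$ summed over the at most $x^{1/h}$ admissible values of $d$, giving $O(2^r x^{1/h})$. Together these produce the stated $O_h(2^r x^{1/h})$.

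The step I expect to require the most care is the uniformity of the error in both $r$ and the primes $q_i$: one must keep the sieve error attached to each individual $d$ rather than to the completed series, so that the factor $2^r$ multiplies only $x^{1/h}$ (the range of $d$) and never $x$. The other point to handle cleanly is tracking the restriction $(d,Q)=1$ throughout, and verifying that dropping it when passing to the infinite Euler product is absorbed exactly into the correction factor $\zeta(h)^{-1}\prod_i(1-q_i^{-h})^{-1}$; this is what allows the final algebraic simplification to go through without spurious dependence on $Q$ in the error.
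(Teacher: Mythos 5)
Your proof is correct. Note that the paper itself does not prove this lemma at all --- it is quoted verbatim from Jakimczuk--Lal\'in \cite[Lemma 3]{jala2} --- so your argument serves as a self-contained substitute, and it is the standard one: detect $h$-freeness via $\mathbf{1}_{h\text{-free}}(n)=\sum_{d^h\mid n}\mu(d)$, detect coprimality to $Q=q_1\cdots q_r$ by a Legendre sieve, and keep the $O(2^r)$ sieve error attached to each of the at most $x^{1/h}$ values of $d$, which is exactly what produces the error $O_h(2^r x^{1/h})$ rather than something multiplying $x$. The algebra also checks out: completing $\sum_{(d,Q)=1}\mu(d)d^{-h}$ to the Euler product $\zeta(h)^{-1}\prod_{i=1}^r(1-q_i^{-h})^{-1}$ costs only $O_h(x^{1/h})$, and $(1-1/q_i)/(1-q_i^{-h})=(q_i^h-q_i^{h-1})/(q_i^h-1)$ gives the stated main term. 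One small caveat about your opening reduction: if some $q_i$ were repeated, the left-hand side is indeed unchanged, but the stated main term would acquire a repeated factor and the formula would be false as written; so the lemma must be (and in this paper always is) applied with distinct primes, and your proof correctly establishes that intended version.
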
 
\section{The first and the second moment of \texorpdfstring{$\omega(n)$}{} over \texorpdfstring{$h$}{}-free numbers}
In this section, we establish the asymptotic distribution of $\omega(n)$ over $h$-free numbers.
\begin{proof}[\textbf{Proof of \thmref{hfreeomega}}]
Let $p^k||n$ denote the property that $p^k$ divides $n$ but $p^{k+1}$ does not divide $n$. Then writing $n = p^k y$ with $(y,p)=1$ for such $n$ and using $r = \min \left\{ h-1, \left\lfloor \log x / \log p \right\rfloor \right\}$, we obtain
\begin{equation*}\label{anomega1}
    \sum_{n \in \mathcal{S}_h(x)} \omega(n) = \sum_{n \in \mathcal{S}_h(x)} \sum_{\substack{p \\ p|n}} 1 = \sum_{p \leq x} \sum_{k=1}^r \sum_{\substack{n \in \mathcal{S}_h(x) \\ p^k || n}} 1 = \sum_{p \leq x} \sum_{k=1}^r \sum_{\substack{y \in \mathcal{S}_h(x/p^k) \\ (y,p) =1 }} 1.
\end{equation*}
Now, first using \lmaref{restrict} for a single prime $p$ to the above and then using \lmaref{primepowerleqx}, we obtain
\begin{align}\label{anomega2}
    \sum_{n \in \mathcal{S}_h(x)} \omega(n) 
 & = \sum_{p \leq x} \sum_{k=1}^r \left( \frac{p^{h} - p^{h-1}}{p^k(p^h - 1)} \right) \frac{x}{\zeta(h)} + O_h \left(  \frac{x}{\log x} \right).
\end{align}
Using the bound $\lfloor x \rfloor \geq x - 1$ and \eqref{PNT}, we can write
\begin{align}\label{caser=h-1}
    \sum_{p \leq x} \sum_{k=1}^r \left( \frac{p^{h} - p^{h-1}}{p^k(p^h - 1)} \right) 
    & = \sum_{p \leq x} \sum_{k=1}^{h-1} \left( \frac{p^{h} - p^{h-1}}{p^k(p^h - 1)} \right) + O \left( \frac{1}{\log x} \right).
\end{align}
Now, using 
$$\sum_{k=1}^{h-1} \frac{p^{h} - p^{h-1}}{p^k(p^h - 1)}  =  \frac{1}{p} - \frac{p - 1}{p(p^h - 1)},$$ 
and Merten's second theorem given as
\begin{equation}\label{sum1/p}
    \sum_{p \leq x} \frac{1}{p} = \log \log x + B_1 + O \left( \frac{1}{\log x} \right),
\end{equation}
and using \lmaref{primepower} with $\tau =h$, we obtain  
\begin{align*}
    \sum_{p \leq x} \sum_{k=1}^{h-1} \left( \frac{p^{h} - p^{h-1}}{p^k(p^h - 1)} \right)
    & = \log \log x + C_1 + O_h \left( \frac{1}{\log x} \right),
\end{align*}
where $C_1$ is defined in \eqref{C1}. Combining the above with \eqref{anomega2} and \eqref{caser=h-1} completes the first part of the proof.
Next, let $r_p = \min \left\{ h-1, \left\lfloor \log x /\log p \right\rfloor \right\}$ and $r_q = \min \left\{ h-1, \left\lfloor \log x / \log q \right\rfloor \right\}$. Then, we have
\begin{equation}\label{mainpart}
    \sum_{n \in \mathcal{S}_h(x)} \omega^2(n) = \sum_{n \in \mathcal{S}_h(x)} \left( \sum_{\substack{p \\ p | n}} 1 \right)^2 = \sum_{n \in \mathcal{S}_h(x)} \omega(n) + \sum_{n \in \mathcal{S}_h(x)} \sum_{\substack{p,q \\ p^k || n, \ q^l || n, \ p \neq q }} \left( \sum_{k=1}^{r_p} \sum_{l=1}^{r_q} 1 \right),
\end{equation}
where $p$ and $q$ above denote primes. The first sum on the right-hand side is the first moment studied above. For the second sum, we rewrite the sum and use \lmaref{restrict} for two primes $p$ and $q$ to obtain
\begin{align}\label{mainpart2}
    & \sum_{n \in \mathcal{S}_h(x)} \sum_{\substack{p,q \\ p^k || n, \ q^l || n, \ p \neq q }} \left( \sum_{k=1}^{r_p} \sum_{l=1}^{r_q} 1 \right) \notag \\
    & = \sum_{\substack{p,q \\ p \neq q, \ pq \leq x}} \sum_{k=1}^{r_p} \sum_{l=1}^{r_q} \left( \left( \frac{p^{h} - p^{h-1}}{p^{k}(p^h - 1)} \right) \left( \frac{q^{h} - q^{h-1}}{q^{l}(q^h - 1)} \right) \frac{x}{\zeta(h)} + O_h \left( \frac{x^{1/h}}{p^{k/h} q^{l/h}} \right) \right). 
\end{align}
We employ \lmaref{primepowerleqx} with $\alpha = 1/h$ and \lmaref{sumplogp} 
to estimate the error term above as 
\begin{equation}\label{mainpart3}
    \sum_{\substack{p,q \\ p \neq q, \ pq \leq x}} \sum_{k=1}^{r_p} \sum_{l=1}^{r_q} \frac{x^{1/h}}{p^{k/h} q^{l/h}} \ll x^{1/h} \sum_{\substack{p,q \\ p \neq q, \ pq \leq x}} \frac{1}{(pq)^{1/h}} \ll_h \frac{x \log \log x}{\log x}.
\end{equation}
For estimating the main term in \eqref{mainpart2}, 
we consider the set $R$ defined as
\begin{equation*}
    R:= \left\{ p \leq x \ | \  \left\lfloor \frac{\log x}{\log p}  \right\rfloor < h-1 \right\}.
\end{equation*}
Using the definition of $r_p$ and $r_q$, we rewrite 
\begin{align}\label{parts}
    & \sum_{\substack{p,q \\ p \neq q, \ pq \leq x}} \sum_{k=1}^{r_p} \sum_{l=1}^{r_q} \left( \frac{p^{h} - p^{h-1}}{p^{k}(p^h - 1)} \right) \left( \frac{q^{h} - q^{h-1}}{q^{l}(q^h - 1)} \right) \notag \\
    & = \sum_{\substack{p,q \\ p \neq q, \ pq \leq x}} \sum_{k=1}^{h-1} \sum_{l=1}^{h-1} \left( \frac{p^{h} - p^{h-1}}{p^{k}(p^h - 1)} \right) \left( \frac{q^{h} - q^{h-1}}{q^{l}(q^h - 1)} \right) -  I_1  -  I_2 + I_3,
\end{align}
where
$$I_1 = \sum_{\substack{p,q \\ p \neq q, \ pq \leq x \\ p \in R}} \sum_{k=\left\lfloor \frac{\log x}{\log p}  \right\rfloor + 1}^{h-1} \sum_{l=1}^{h-1} \left( \frac{p^{h} - p^{h-1}}{p^{k}(p^h - 1)} \right) \left( \frac{q^{h} - q^{h-1}}{q^{l}(q^h - 1)} \right),$$
$$I_2 = \sum_{\substack{p,q \\ p \neq q, \ pq \leq x \\ q \in R}} \sum_{k=1}^{h-1} \sum_{l=\left\lfloor \frac{\log x}{\log q}  \right\rfloor + 1}^{h-1} \left( \frac{p^{h} - p^{h-1}}{p^{k}(p^h - 1)} \right) \left( \frac{q^{h} - q^{h-1}}{q^{l}(q^h - 1)} \right),$$
$$I_3 = \sum_{\substack{p,q \\ p \neq q, \ pq \leq x \\ p,q \in R}} \sum_{k=\left\lfloor \frac{\log x}{\log p}  \right\rfloor + 1}^{h-1} \sum_{l=\left\lfloor \frac{\log x}{\log q}  \right\rfloor + 1}^{h-1} \left( \frac{p^{h} - p^{h-1}}{p^{k}(p^h - 1)} \right) \left( \frac{q^{h} - q^{h-1}}{q^{l}(q^h - 1)} \right).$$
The sum $I_1$, $I_2$, and $I_3$ contribute to the error term. Note that $I_1 = I_2$. In fact, using \eqref{sum1/p} and $\lfloor x \rfloor \geq x-1$, we estimate
$$I_1 \ll \sum_{\substack{p,q \\ p \neq q, \ pq \leq x \\ p \in R}} \frac{1}{p^{\frac{\log x}{\log p}}} \frac{1}{q} \ll \frac{1}{x} \sum_{q \leq x} \frac{1}{q} \ll \frac{\log \log x}{x}.$$
Finally, for $I_3$, using \eqref{PNT}, we have
$$I_3 \ll \sum_{\substack{p,q \\ p \neq q, \ pq \leq x \\ p,q \in R}} \frac{1}{p^{\frac{\log x}{\log p}}} \frac{1}{q^{\frac{\log x}{\log q}}} \ll \frac{1}{x^2} \sum_{p \leq x} 1 \ll \frac{1}{x \log x}.$$
We next estimate the main term in \eqref{parts}. First, we rewrite the sum as
\begin{align}\label{part1}
    & \sum_{\substack{p,q \\ p \neq q, \ pq \leq x}} \sum_{k=1}^{h-1} \sum_{l=1}^{h-1} \left( \frac{p^{h} - p^{h-1}}{p^{k}(p^h - 1)} \right) \left( \frac{q^{h} - q^{h-1}}{q^{l}(q^h - 1)} \right) \notag \\
    & = \sum_{\substack{p,q \\ pq \leq x}} \left( \frac{p^{h-1} - 1}{p^h - 1} \right) \left( \frac{q^{h-1} - 1}{q^h - 1}\right) - \sum_{\substack{p \\ p \leq x^{1/2}}} \left( \frac{p^{h-1} - 1}{p^h - 1} \right)^2.
\end{align}
The second sum above is estimated using \lmaref{primepower} as
\begin{align}\label{part2}
    \sum_{\substack{p \\ p \leq x^{1/2}}} \left( \frac{p^{h-1} - 1}{p^h - 1} \right)^2 
    & = \sum_{\substack{p}} \left( \frac{p^{h-1} - 1}{p^h - 1} \right)^2 + O \left( \frac{1}{x^{1/2} \log x}\right).
\end{align}
For the first sum on the right-hand side in \eqref{part1}, using 
$$\frac{p^{h-1} - 1}{p^h -1} = \frac{1}{p} - \frac{p-1}{p(p^h - 1)},$$ and the symmetry in $p$ and $q$, we have
\begin{align*}
    & \sum_{\substack{p,q \\ pq \leq x}} \left( \frac{p^{h-1} - 1}{p^h - 1} \right) \left( \frac{q^{h-1} - 1}{q^h - 1}\right) \notag \\
    & = \sum_{\substack{p,q \\ pq \leq x}} \frac{1}{pq} - 2 \sum_{\substack{p,q \\ pq \leq x}} \frac{1}{p} \left( \frac{q-1}{q(q^h - 1)} \right) + \sum_{\substack{p,q \\ pq \leq x}} \left( \frac{p-1}{p(p^h - 1)} \right) \left( \frac{q-1}{q(q^h - 1)} \right).
\end{align*}
We estimate the sums on the right-hand side above separately. For the first sum, we use \lmaref{saidak}. For the second sum, we use \lmaref{primepower}, and then \eqref{sum1/p} and \eqref{PNT} to obtain
\begin{align*}
    \sum_{\substack{p,q \\ pq \leq x}} \frac{1}{p} \left( \frac{q-1}{q(q^h - 1)} \right) 
    & = \sum_{\substack{p \\ p \leq x/2}} \frac{1}{p} \left( \sum_{p} \frac{p-1}{p(p^h - 1)}  + O \left( \frac{1}{(x/p) \log (x/p)}\right)\right) \notag \\
    & = \left( \sum_{p} \frac{p-1}{p(p^h - 1)} \right) \left( \log \log x + B_1 \right) + O \left( \frac{1}{\log x} \right).
\end{align*}
For the third sum, we use \lmaref{primepower} twice, and then \lmaref{sumplogp} to obtain
\begin{align*}
     \sum_{\substack{p,q \\ pq \leq x}} \left( \frac{p-1}{p(p^h - 1)} \right) \left( \frac{q-1}{q(q^h - 1)} \right) 
    & = \left( \sum_{p} \frac{p-1}{p(p^h - 1)} \right)^2 + O \left( \frac{\log \log x}{x \log x}\right) .
\end{align*}
Combining the last three results and \lmaref{saidak}, we obtain
\begin{align*}
    & \sum_{\substack{p,q \\ pq \leq x}} \left( \frac{p^{h-1} - 1}{p^h - 1} \right) \left( \frac{q^{h-1} - 1}{q^h - 1}\right) = (\log \log x)^2 + 2 C_1 \log \log x + C_1^2 - \zeta(2) + O \left( \frac{\log \log x}{\log x} \right).
\end{align*}
Combining \eqref{mainpart}, \eqref{mainpart2}, \eqref{mainpart3}, \eqref{parts}, \eqref{part1}, and \eqref{part2} with the above equation and using the first-moment estimate, we obtain the required second moment.
\end{proof}
\section{The first and the second moment of \texorpdfstring{$\omega(n)$}{} over \texorpdfstring{$h$}{}-full numbers}
Let $h \geq 2$ be an integer. Recall that $\mathcal{N}_h$ is the set of $h$-full numbers and $\mathcal{N}_h(x)$ is the set of $h$-full numbers less than or equal to $x$. 
Note that, if $n \in \mathcal{N}_h(x)$, then for any prime $p | n$, we must have $p^h | n$. Moreover, the maximal power of $p$ that divides any such $n \leq x$ is $\lfloor \log x /\log p \rfloor$. Using this and $n = p^k y$ with $(y,p) =1$ when $p^k || n$, we can write
\begin{equation}\label{omegaonhfull}
\sum_{n \in \mathcal{N}_h(x)} \omega(n) = \sum_{n \in \mathcal{N}_h(x)} \sum_{\substack{p \\ p | n}} 1 = \sum_{p \leq x^{1/h}} \sum_{k = h}^{\left\lfloor \frac{\log x}{\log p}  \right\rfloor} \sum_{\substack{y \in \mathcal{N}_h(x/p^k) \\(y,p) =1}} 1.
\end{equation}

Let $q$ be a fixed prime. We intend to estimate the sum
\begin{equation}\label{Aqhx}
A_{q,h}(y) = \sum_{\substack{n \in \mathcal{N}_h(y) \\ (n,q) =1}} 1.
\end{equation}
To prove the estimate for the above sum, we will use techniques from 
the work of Ivi\'c and Shiu \cite{is}. Let's recall some of the results about $h$-full numbers from their work:

Notice that the generating series for the $h$-full numbers is defined on $\Re(s) > 1/h$ as:
$$N_h(s) = \sum_{n \in \mathcal{N}_h} \frac{1}{n^s} = \prod_p \left( 1 + \frac{1}{p^{hs}} + \cdots + \frac{1}{p^{ks}} + \cdots \right) = \prod_p \left( 1 +  \frac{p^{-hs}}{1 - p^{-s}} \right).$$
Let $A_h(y)$ denote the number of $h$-full integers not exceeding $y$.
In \cite[Section 1]{is}, Ivi\'c and Shiu proved that
\begin{equation}\label{Nhxformula}
N_h(s) = \zeta(hs) \zeta((h+1)s) \cdots \zeta((2h-1)s) \zeta^{-1}((2h+2)s) \phi_h(s),
\end{equation}
where $\phi_h(s)$ satisfies
\begin{equation}\label{iden1}
\prod_{p} \left( 1 - p^{-(2h+2)s} + \sum_{r = 2h+3}^{(3h^2+h-2)/2} a_{r,h} p^{-rs} \right) = \zeta^{-1}((2h+2)s) \phi_h(s),
\end{equation}
and where $a_{r,h}$ satisfy the identity
\begin{equation}\label{iden2}
    \left( 1 + \frac{v^h}{1-v} \right)(1-v^h) \cdots (1-v^{2h-1}) = 1 - v^{2h+2} + \sum_{2h+3}^{(3h^2+h-2)/2} a_{r,h} v^r.
\end{equation}
Note that $|a_{r,h}| \leq h 2^h$, $\phi_2(s) = 1$, and $\phi_h(s)$ has a Dirichlet series with abscissa of absolute convergence equal to $1/(2h+3)$ if $h > 2$. Moreover, it is also established that
$$N_h(s) = G_h(s) K_h(s),$$
where
\begin{equation}\label{K_h(s)}
K_h(s) = \sum_{n=1}^\infty k_h(n) n^{-s} = \zeta(hs) \zeta((h+1)s) \cdots \zeta((2h-1)s),
\end{equation}
and
\begin{equation*}
G_h(s) = \sum_{n=1}^\infty g_h(n) n^{-s} = \frac{\phi_h(s)}{\zeta((2h+2)s)},
\end{equation*}
is a Dirichlet series converging absolutely in $\Re(s) > 1/(2h+2)$. Thus, one can write
$$A_h(y) 
= \sum_{mn \leq y} g_h(m) k_h(n).$$
Additionally, let $S_h(y)$ denote the sum
\begin{equation}\label{S_h(x)}
S_h(y) := \sum_{n \leq y} k_h(n).
\end{equation}
In \cite[Theorem 1]{is}, Ivi\'c and Shiu proved that 
\begin{equation}\label{s_h(x)}
S_h(y) = \sum_{r=h}^{2h-1} C_{r,h} y^{1/r} + \Delta_h^*(y),
\end{equation}
where 
\begin{equation}\label{crh}
    C_{r,h} = \prod_{j=h, j \neq r}^{2h-1} \zeta(j/r),
\end{equation}
and where
\begin{equation}\label{bounddeltah*}
\Delta_h^*(y) \ll y^{\eta_h},    
\end{equation}
with $1/(2h+2) < \eta_h < 1/(2h -1)$.
Using the above pre-requisite results, we establish an asymptotic result for $A_{q,h}(y)$ as defined in \eqref{Aqhx}. We prove:
\begin{lma}\label{Aqhlemma}
Let $q_1,q_2,\cdots,q_r$ be distinct primes. Let
$$A_{q_1,\cdots,q_r,h}(x) := \sum_{\substack{n \in \mathcal{N}_h(x) \\ (n,q_1) = \cdots = (n,q_r) =1}} 1.$$
For any $x >2$, we have
\begin{equation*}
    A_{q_1,\cdots,q_r,h}(x) = \gamma_{q_1,\cdots,q_r,0,h} x^{\frac{1}{h}} + \gamma_{q_1,\cdots,q_r,1,h} x^{\frac{1}{h+1}} + \cdots + \gamma_{q_1,\cdots,q_r,h-1,h} x^{\frac{1}{2h-1}} + O_h \left( x^{\eta_h} \right),
\end{equation*}
where $1/(2h+2) < \eta_h < 1/(2h -1)$, and for $i \in \{0,1,\cdots,h-1\}$,
\begin{equation}\label{gammaqh}
\gamma_{q_1,\cdots,q_r,i,h} = C_{h+i,h} \frac{\phi_h(1/(h+i))}{\zeta((2h+2)/(h+i)) \left( \prod_{j=1}^r\left( 1 +  \frac{q_j^{-h/(h+i)}}{1 - q_j^{-1/(h+i)}} \right) \right)},
\end{equation}
with $C_{h+i,h}$ defined in \eqref{crh} and $\phi_h(s)$ satisfying \eqref{iden1}.
%
\end{lma}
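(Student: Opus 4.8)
The plan is to realize $A_{q_1,\cdots,q_r,h}(x)$ as the summatory function of a Dirichlet series obtained from $N_h(s)$ by deleting the Euler factors at $q_1,\dots,q_r$, and then to run the convolution argument of \cite{is} against $K_h(s)$. Imposing the coprimality conditions multiplies the local factor of $N_h$ at each $q_j$ by its reciprocal, so the associated series is
$$\sum_{\substack{n\in\mathcal{N}_h\\(n,q_j)=1\ \forall j}}\frac{1}{n^s}=N_h(s)\prod_{j=1}^{r}\left(1+\frac{q_j^{-hs}}{1-q_j^{-s}}\right)^{-1}.$$
Using the factorization $N_h(s)=G_h(s)K_h(s)$ recorded above, I would absorb the finite product of reciprocal Euler factors into the smooth part by setting
$$\widetilde{G}(s):=G_h(s)\prod_{j=1}^{r}\left(1+\frac{q_j^{-hs}}{1-q_j^{-s}}\right)^{-1}=\sum_{m\ge 1}\frac{\widetilde{g}(m)}{m^s},$$
so that the coprimality-restricted series equals $\widetilde{G}(s)K_h(s)$ and its coefficients are exactly the indicator of $\{n\in\mathcal{N}_h:(n,q_j)=1\ \forall j\}$; here $\widetilde{g}=g_h\ast b$, where $b$ is the (finitely supported per prime) coefficient sequence of the reciprocal local factors. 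The key preliminary point, addressed below, is that $\widetilde{G}$ should remain absolutely convergent in a half-plane reaching to the left of the exponent $\eta_h$ from \eqref{bounddeltah*}.

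With this set-up I would split hyperbolically against $K_h$, writing
$$A_{q_1,\cdots,q_r,h}(x)=\sum_{mn\le x}\widetilde{g}(m)\,k_h(n)=\sum_{m\le x}\widetilde{g}(m)\,S_h\!\left(\frac{x}{m}\right),$$
with $S_h$ as in \eqref{S_h(x)}, and substitute the expansion \eqref{s_h(x)}. This produces the $h$ candidate main terms $C_{h+i,h}\,x^{1/(h+i)}\sum_{m\le x}\widetilde{g}(m)m^{-1/(h+i)}$ for $0\le i\le h-1$, together with the remainder $\sum_{m\le x}\widetilde{g}(m)\Delta_h^{*}(x/m)$. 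Since $1/(h+i)\ge 1/(2h-1)>\eta_h$ for every admissible $i$, completing each inner sum to $\sum_{m\ge1}\widetilde{g}(m)m^{-1/(h+i)}=\widetilde{G}(1/(h+i))$ costs only a tail of size $\ll x^{1/(h+i)}\cdot x^{-(1/(h+i)-\eta_h)}=x^{\eta_h}$, which is acceptable. Evaluating
$$\widetilde{G}\!\left(\tfrac{1}{h+i}\right)=G_h\!\left(\tfrac{1}{h+i}\right)\prod_{j=1}^{r}\left(1+\frac{q_j^{-h/(h+i)}}{1-q_j^{-1/(h+i)}}\right)^{-1}$$
and recalling $G_h(s)=\phi_h(s)/\zeta((2h+2)s)$ then identifies the coefficient of $x^{1/(h+i)}$ precisely as $\gamma_{q_1,\cdots,q_r,i,h}$ in \eqref{gammaqh}.

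The main work is the remainder $\sum_{m\le x}\widetilde{g}(m)\Delta_h^{*}(x/m)$. Applying the bound $\Delta_h^{*}(y)\ll y^{\eta_h}$ from \eqref{bounddeltah*}, this is $\ll x^{\eta_h}\sum_{m}|\widetilde{g}(m)|m^{-\eta_h}$, so everything (including the tail completions above) reduces to showing that $\widetilde{G}$ converges \emph{absolutely} at $s=\eta_h$ with an implied constant depending only on $h$. Factoring $\sum_{m}|\widetilde{g}(m)|m^{-\eta_h}$ as the product of $\sum_{m}|g_h(m)|m^{-\eta_h}$ (finite, since $G_h$ converges absolutely for $\Re(s)>1/(2h+2)$ and $\eta_h>1/(2h+2)$) with the local sums $\sum_{k}|b_{q_j}(k)|q_j^{-k\eta_h}$ over the removed primes, I would estimate each local sum from the linear recursion satisfied by the coefficients of $(1+q_j^{-hs}/(1-q_j^{-s}))^{-1}$, aiming to show it is $1+O_h(q_j^{-h\eta_h})=O_h(1)$; since the $q_j$ are distinct and finite in number, the product then stays $O_h(1)$ and the remainder is $O_h(x^{\eta_h})$. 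I expect this estimate to be the crux, and the one requiring genuine care: each reciprocal local factor has radius of convergence $\rho_h$ equal to the modulus of the smallest root of $1-u+u^{h}$, so the bound $\sum_{k}|b_{q_j}(k)|q_j^{-k\eta_h}=O_h(1)$ presupposes $q_j^{-\eta_h}$ lies inside $\rho_h$, and the verification of this interplay between $\eta_h$, the smallest primes $q_j$, and $\rho_h$ — uniformly in the chosen primes, so that the implied constant depends only on $h$ — is where the argument must be handled most carefully.
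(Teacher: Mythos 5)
Your overall architecture is exactly the paper's: both proofs form the coprimality-restricted series $N_{q_1,\dots,q_r,h}(s)=G_{q_1,\dots,q_r,h}(s)K_h(s)$ with $G_{q_1,\dots,q_r,h}(s)=G_h(s)\prod_j\bigl(1+\tfrac{q_j^{-hs}}{1-q_j^{-s}}\bigr)^{-1}$, convolve against $S_h$ via $\sum_{m\le x}\widetilde g(m)S_h(x/m)$, complete the tails at cost $O_h(x^{\eta_h})$, and bound the remainder through \eqref{bounddeltah*}. The problem is the step you yourself flag as the crux, and it is not merely delicate: it fails. You propose to bound $\sum_m|\widetilde g(m)|m^{-\eta_h}$ by $\bigl(\sum_m|g_h(m)|m^{-\eta_h}\bigr)\prod_j\sum_k|b_{q_j}(k)|q_j^{-k\eta_h}$, expanding each reciprocal local factor $\bigl(1+\tfrac{v^h}{1-v}\bigr)^{-1}=\tfrac{1-v}{1-v+v^h}$ on its own. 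Its radius of convergence $\rho_h$ is the least modulus among the roots of $1-v+v^h$, and for $h\ge 3$ one has $\rho_h<1$: for $h=3$ the real root is $-t_0$ with $t_0\approx 1.3247$ the plastic number, so the complex pair has modulus $\sqrt{1/t_0}\approx 0.8688$. Since $\eta_3<1/(2h-1)=1/5$, you have $2^{-\eta_3}>2^{-1/5}\approx 0.8706>\rho_3$, so for $q_j=2$ the series $\sum_k b_2(k)2^{-k\eta_3}$ diverges for \emph{every} admissible $\eta_3$ — the claimed bound $1+O_h(q_j^{-h\eta_h})$ is unattainable, not just hard. The situation worsens as $h$ grows ($\rho_4\approx 0.845$ while $\eta_4<1/7$), so more small primes are excluded; your plan survives only for $h=2$, where $\rho_2=1$.

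The missing idea — and what the paper actually does — is never to expand the reciprocal factor in isolation. Since $g_h$ is multiplicative, the local factor of $G_{q_1,\dots,q_r,h}$ at a removed prime $q_j$ is the product of the reciprocal factor with the local factor $1-q_j^{-(2h+2)s}+\sum_{r}a_{r,h}q_j^{-rs}$ of $\zeta^{-1}((2h+2)s)\phi_h(s)$, and the polynomial identity \eqref{iden2} collapses this product to
\begin{equation*}
\left(1+\frac{v^h}{1-v}\right)^{-1}\left(1-v^{2h+2}+\sum_{r=2h+3}^{(3h^2+h-2)/2}a_{r,h}v^{r}\right)=\prod_{t=h}^{2h-1}\left(1-v^{t}\right),\qquad v=q_j^{-s},
\end{equation*}
a \emph{finite} polynomial with at most $2^h$ monomials and coefficients bounded uniformly in $q_j$. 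The divergence you ran into is cancelled exactly by this pairing, so the local sums at the removed primes are trivially $1+O_h(q_j^{-h\eta_h})$, and the paper verifies absolute convergence of the full Euler product for $\Re(s)>1/(2h+2)$, uniformly in the $q_j$'s, by taking logarithms (using $|a_{r,h}|\le h2^h$). With this substitution for your local-factor estimate, the rest of your argument goes through and coincides with the paper's proof, including the identification of $\gamma_{q_1,\dots,q_r,i,h}$ via $G_{q_1,\dots,q_r,h}(1/(h+i))$.
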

\begin{proof}
Let us consider the sum defined on $\Re(s) > 1/h$ as
$$N_{q_1,\cdots,q_r,h}(s) 
= \sum_{\substack{n \in \mathcal{N}_h \\ (n,q_1) = \cdots = (n,q_r) =1}} \frac{1}{n^s} 
= \prod_{\substack{p \\ p \neq q_1,\cdots,q_r}} \left( 1 +  \frac{p^{-hs}}{1 - p^{-s}} \right).$$
By \eqref{Nhxformula}, we have
$$N_{q_1,\cdots,q_r,h}(s) = \zeta(hs) \zeta((h+1)s) \cdots \zeta((2h-1)s) \frac{ \zeta^{-1}((2h+2)s) \phi_h(s)}{\prod_{j=1}^r \left( 1 +  \frac{q_j^{-hs}}{1 - q_j^{-s}} \right)}.$$
Thus, we can write
$$N_{q_1,\cdots,q_r,h}(s) = G_{q_1,\cdots,q_r,h}(s) K_h(s),$$
where $K_h(s)$ is defined in \eqref{K_h(s)} and 
$$G_{q,h}(s) = \sum_{n=1}^\infty g_{q_1,\cdots,q_r,h}(n) n^{-s} = \frac{\phi_h(s)}{\zeta((2h+2)s) \left( \prod_{j=1}^r \left( 1 +  \frac{q_j^{-hs}}{1 - q_j^{-s}} \right)\right)}.$$
By \eqref{iden1} and \eqref{iden2}, we have
\begin{align}\label{iden3}
    & \frac{\phi_h(s)}{\zeta((2h+2)s) \left( \prod_{j=1}^r \left( 1 +  \frac{q_j^{-hs}}{1 - q_j^{-s}} \right)\right)} \notag \\ 
    & = \prod_{\substack{p \\ p \neq q_1,\cdots,q_r}} \left( 1 - p^{-(2h+2)s} + \sum_{r = 2h+3}^{(3h^2+h-2)/2} a_{r,h} p^{-rs} \right) \left( \prod_{j=1}^r \prod_{r=h}^{2h-1} \left( 1 - q_j^{-rs}\right) \right).
\end{align}
Note that, for $\Re(s) > 1/(2h+2)$ and for all sufficiently large $p$, using $|a_{r,h}| \leq h 2^h$, we have
$$\left| p^{-(2h+2)s} + \sum_{r = 2h+3}^{(3h^2+h-2)/2} a_{r,h} p^{-rs}  \right| < 1.$$
Moreover, $|q_j^{-rs}| < 1$ for all $j=1, \cdots, r$ and for all $r \in \{h, h+1, \cdots, 2h-1 \}$. Thus, taking the logarithm of the right-hand side of \eqref{iden3} in the region $\Re(s) > 1/(2h+2)$ and using $|a_{r,h}| \leq h 2^h$ again, we obtain 
\begin{align*}
    & \log \left( \prod_{\substack{p \\ p \neq q_1,\cdots,q_r}} \left( 1 - p^{-(2h+2)s} + \sum_{r = 2h+3}^{(3h^2+h-2)/2} a_{r,h} p^{-rs} \right) \left( \prod_{j=1}^r \prod_{r=h}^{2h-1} \left( 1 - q_j^{-rs}\right) \right) \right) \\
    & = \sum_{\substack{p \\ p \neq q_1,\cdots,q_r}} \log \left( 1 - p^{-(2h+2)s} + \sum_{r = 2h+3}^{(3h^2+h-2)/2} a_{r,h} p^{-rs} \right) + \sum_{j=1}^r \sum_{r = h}^{2h-1} \log \left( 1 - q_j^{-rs} \right) \\
    & \ll_{h,r} \sum_{p} \frac{1}{p^{(2h+2) \Re(s)}} + \frac{1}{2^{h \Re(s)}}.
\end{align*}
The last term of the previous equation is bounded with $\ll_h \zeta((2h+2) \Re(s)) + 1/2^{h \Re(s)}$ and thus is finite for $\Re(s) > 1/(2h+2)$. Hence, by the theory of convergence of infinite products, $G_{q,h}(s)$ converges absolutely in $\Re(s) > 1/(2h+2)$. 

Now, we can write
$$A_{q_1,\cdots,q_r,h}(x) = \sum_{mn \leq x} g_{q_1,\cdots,q_r,h}(m) k_h(n) = \sum_{m \leq x} g_{q_1,\cdots,q_r,h}(m) S_h(x/m),$$
where $S_h$ is defined in \eqref{S_h(x)}. Further, using \eqref{s_h(x)}, we obtain
\begin{equation*}
A_{q_1,\cdots,q_r,h}(x) = \sum_{r=h}^{2h-1} C_{r,h} x^{1/r} \left( \sum_{m \leq x} \frac{g_{q_1,\cdots,q_r,h}(m)}{m^{1/r}} \right) +  \sum_{m \leq x} g_{q_1,\cdots,q_r,h}(m) \Delta_h^*(x/m).
\end{equation*}
We need to estimate the sums above. 
Note that
$$\sum_{n \leq x} g_{q_1,\cdots,q_r,h}(n) n^{-\eta_h} = O_h(1)
.$$
Thus, for each $r = h, h+1, \cdots, 2h-1$, using partial summation, we obtain
\begin{equation*}
\sum_{m \leq x} \frac{g_{q_1,\cdots,q_r,h}(m)}{m^{1/r}} = G_{q_1,\cdots,q_r,h}(1/r) - \sum_{m > x} \frac{g_{q_1,\cdots,q_r,h}(m)}{m^{1/r}} = G_{q_1,\cdots,q_r,h}(1/r) + O_h \left( x^{\eta_h - \frac{1}{r}} \right),
\end{equation*}
and by \eqref{bounddeltah*},
$$\sum_{m \leq x} g_{q_1,\cdots,q_r,h}(m) \Delta_h^*(x/m) \ll x^{\eta_h} \sum_{m \leq x} |g_{q_1,\cdots,q_r,h}(m)| m^{-\eta_h} \ll_h x^{\eta_h}.$$
Combining the above results, we obtain
\begin{align*}
    A_{q_1,\cdots,q_r,h}(x) & = \sum_{r=h}^{2h-1} C_{r,h} G_{q_1,\cdots,q_r,h}(1/r) x^{1/r} + O_h \left( x^{\eta_h} \right) \\
    & = \gamma_{q_1,\cdots,q_r,0,h} x^{1/h} + \cdots + \gamma_{q_1,\cdots,q_r,h-1,h} x^{1/(2h-1)} + O_h \left( x^{\eta_h} \right),
\end{align*}
which completes the proof.
\end{proof}
\begin{rmk}
Note that
\begin{equation}\label{gamma0heq}
\gamma_{q_1,\cdots,q_r,0,h} = \frac{\gamma_{0,h}}{\prod_{j=1}^r \left( 1+ \frac{q_j^{-1}}{1-q_j^{-1/h}} \right)},
\end{equation}
where 
\begin{equation}\label{gamma0_hmain}
\gamma_{0,h} = C_{h,h} \frac{\phi_h(1/h)}{\zeta((2h+2)/h)} = \prod_{p} \left( 1 + \frac{p - p^{1/h}}{p^2(p^{1/h}-1)} \right),
\end{equation}
with the product formula described in \cite[Page 3]{jala}. 
\end{rmk}
We use \lmaref{Aqhlemma} with one prime to prove the first and the second moment estimates for $\omega(n)$ over h-full numbers. We prove:
\begin{proof}[\textbf{Proof of \thmref{hfullomega}}]
Inserting the formula for $A_{q,h}(y)$ given in \lmaref{Aqhlemma} with $y = x/p^k$ in \eqref{omegaonhfull}, we obtain
\begin{align}\label{mainomegahfull1}
& \sum_{n \in \mathcal{N}_h(x)} \omega(n) \notag\\
& = \sum_{p \leq x^{1/h}} \sum_{k = h}^{\left\lfloor \frac{\log x}{\log p}  \right\rfloor} A_{p,h}(x/p^k) \notag \\
& = \sum_{p \leq x^{1/h}} \sum_{k = h}^{\left\lfloor \frac{\log x}{\log p}  \right\rfloor} \left( \gamma_{p,0,h} (x/p^k)^{1/h} + \cdots + \gamma_{p,h-1,h} (x/p^k)^{1/(2h-1)} + O_h \left( (x/p^k)^{\eta} \right) \right).
\end{align}
Let us study the main term above given as
$$\sum_{p \leq x^{1/h}} \sum_{k = h}^{\left\lfloor \frac{\log x}{\log p}  \right\rfloor} \gamma_{p,0,h} (x/p^k)^{1/h} = \gamma_{0,h} x^{1/h} \sum_{p \leq x^{1/h}} \sum_{k = h}^{\left\lfloor \frac{\log x}{\log p}  \right\rfloor} \left( \frac{1}{p^{k/h} \left( 1 + \frac{p^{-1}}{1-p^{-1/h}}\right)} \right).$$
We obtain
\begin{align}\label{sum_need}
& \gamma_{0,h} x^{1/h} \sum_{p \leq x^{1/h}} \sum_{k = h}^{\left\lfloor \frac{\log x}{\log p}  \right\rfloor} \left( \frac{1}{p^{k/h} \left( 1 + \frac{p^{-1}}{1-p^{-1/h}}\right)} \right) \notag \\
& = \gamma_{0,h} x^{1/h} \sum_{p \leq x^{1/h}} \frac{1}{p\left( 1- p^{-1/h}+p^{-1} \right)} - \gamma_{0,h} x^{1/h} \sum_{p \leq x^{1/h}} \frac{(p^{-1/h})^{\left\lfloor \frac{\log x}{\log p}  \right\rfloor - h +1}}{p\left( 1- p^{-1/h}+p^{-1} \right)}.
\end{align}
Using $\lfloor x \rfloor \geq x-1$, and then \eqref{PNT} with $y = x^{1/h}$, we bound the second term above with
\begin{equation}\label{needme}
\gamma_{0,h} x^{1/h} \sum_{p \leq x^{1/h}} \frac{(p^{-1/h})^{\left\lfloor \frac{\log x}{\log p}  \right\rfloor - h +1}}{p\left( 1- p^{-1/h}+p^{-1} \right)} \ll_h \frac{x^{1/h}}{\log x}.
\end{equation}
Thus, it remains to study the first term in \eqref{sum_need}. A little manipulation yields
\begin{align}\label{req3}
\sum_{p \leq x^{1/h}} \frac{1}{p\left( 1- p^{-1/h}+p^{-1} \right)}  
& = \sum_{p \leq x^{1/h}} \frac{1}{p} + \sum_{p \leq x^{1/h}}  \frac{p^{-1/h} - p^{-1}}{ p \left( 1- p^{-1/h}+p^{-1} \right) }.
\end{align}
For the first sum on the right-hand side above, we use \eqref{sum1/p} to obtain
\begin{equation}\label{req2}
\sum_{p \leq x^{1/h}} \frac{1}{p} = \log \log x - \log h + B_1 + O_h \left( \frac{1}{\log x} \right).
\end{equation}
For the second sum on the right-hand side of \eqref{req3}, we use the convergence of the corresponding infinite series and \lmaref{primepower} to obtain
\begin{align*}
\sum_{p \leq x^{1/h}}  \frac{p^{-1/h} - p^{-1}}{ p \left( 1- p^{-1/h}+p^{-1} \right) } 
& = \mathcal{L}_h(h+1) - \mathcal{L}_h(2h) + O_h \left( \frac{1}{x^{1/h^2} (\log x)} \right).
\end{align*}  

Combining the last three results, we obtain
\begin{align}\label{req_bound_p}
    \sum_{p \leq x^{1/h}} \frac{1}{p\left( 1- p^{-1/h}+p^{-1} \right)} & = \log \log x + D_1
    + O_h \left( \frac{1}{\log x} \right),
\end{align}
where $D_1$ is defined in \eqref{D1}. Combining the above result with \eqref{mainomegahfull1}, \eqref{sum_need}, and \eqref{needme}, we obtain
\begin{align}\label{hfullfinal}
\sum_{n \in \mathcal{N}_h(x)} \omega(n) & =  \gamma_{0,h} x^{1/h} \log \log x + D_1 \gamma_{0,h} x^{1/h}  + O_h \left( \frac{x^{1/h}}{\log x} \right)  \notag \\
& \hspace{.5cm} + O_h \left( \sum_{p \leq x^{1/h}} \sum_{k = h}^{\left\lfloor \frac{\log x}{\log p}  \right\rfloor} (x/p^k)^{1/(h+1)}\right).
\end{align}
For the error term above, using \lmaref{primepowerleqx} with $\alpha = h/(h+1)$, we obtain
$$\sum_{p \leq x^{1/h}} \sum_{k = h}^{\left\lfloor \frac{\log x}{\log p}  \right\rfloor} (x/p^k)^{1/(h+1)} \ll x^{1/(h+1)} \sum_{p \leq x^{1/h}} \frac{1}{p^{h/(h+1)}} \ll_h \frac{x^{1/h}}{\log x}.$$

Inserting the above back into \eqref{hfullfinal} completes the first part of the proof.

Now, note that
\begin{equation}\label{hfullp1}
    \sum_{n \in \mathcal{N}_h(x)} \omega^2(n)
    = \sum_{n \in \mathcal{N}_h(x)} \left( \sum_{\substack{p \\ p^h | n}} 1 \right)^2 
    = \sum_{n \in \mathcal{N}_h(x)} \omega(n) + \sum_{n \in \mathcal{N}_h(x)}  \sum_{\substack{p,q \\ p^k || n, \ q^l || n, \ p \neq q }} \left( \sum_{k=h}^{\left\lfloor \frac{\log x}{\log p}  \right\rfloor} \sum_{l=h}^{\left\lfloor \frac{\log x}{\log q}  \right\rfloor} 1 \right).
\end{equation}
The first sum on the right side above is the first moment studied earlier. For the second sum, we first rewrite the sum, use \lmaref{Aqhlemma} with two distinct primes $p$ and $q$, and use \eqref{gamma0heq} to obtain
\begin{align}\label{hfullp2}
    & \sum_{n \in \mathcal{N}_h(x)}  \sum_{\substack{p,q \\ p^k || n, \ q^l || n, \ p \neq q }} \left( \sum_{k=h}^{\left\lfloor \frac{\log x}{\log p}  \right\rfloor} \sum_{l=h}^{\left\lfloor \frac{\log x}{\log q}  \right\rfloor} 1 \right) \notag \\
    & = \gamma_{0,h} x^{1/h} \sum_{\substack{p,q \\ p \neq q, \ pq \leq x^{1/h}}} \sum_{k=h}^{\left\lfloor \frac{\log x}{\log p} \right\rfloor} \sum_{l=h}^{\left\lfloor \frac{\log x}{\log q} \right\rfloor} \frac{1}{p^{k/h} \left( 1 + \frac{p^{-1}}{1 - p^{-1/h}} \right)} \frac{1}{q^{l/h}  \left( 1 + \frac{q^{-1}}{1 - q^{-1/h}} \right)} \notag \\
    & \hspace{.5cm} + O_h \left( x^\frac{1}{h+1}   \sum_{\substack{p,q \\ p \neq q, \ pq \leq x^{1/h}}} \sum_{k=h}^{\left\lfloor \frac{\log x}{\log p} \right\rfloor} \sum_{l=h}^{\left\lfloor \frac{\log x}{\log q} \right\rfloor} \frac{1}{p^{k/(h+1)} q^{l/(h+1)}}\right).
\end{align}
The error term above is bounded by using \lmaref{primepowerleqx} and then \lmaref{sumplogp} as the following
\begin{align}\label{hfullp3}
     x^\frac{1}{h+1} \sum_{\substack{p,q \\ p \neq q, \ pq \leq x^{1/h}}} \sum_{k=h}^{\left\lfloor \frac{\log x}{\log p} \right\rfloor} \sum_{l=h}^{\left\lfloor \frac{\log x}{\log q} \right\rfloor}  \frac{1}{p^{k/(h+1)} q^{l/(h+1)}} 
     & \ll_h x^{\frac{1}{h+1}} \sum_{\substack{p \\ p \leq x^{1/h}/2}} \frac{1}{p^{h/(h+1)}}  \sum_{\substack{q \\ q \leq x^{1/h}/p}} \frac{1}{q^{h/(h+1)}}  \notag \\
     & \ll_h x^{\frac{1}{h}} \sum_{\substack{p \\ p \leq x^{1/h}/2}} \frac{1}{p \log(x^{1/h}/p)} \notag\\
     & \ll_h \frac{x^{\frac{1}{h}} \log \log x}{\log x}.
\end{align}
Next, we estimate the main term in \eqref{hfullp2}. First note that
$$\sum_{k=h}^{\left\lfloor \frac{\log x}{\log p} \right\rfloor} \frac{1}{p^{k/h} \left( 1 + \frac{p^{-1}}{1 - p^{-1/h}} \right)} = \frac{1}{p(1-p^{-1/h}+p^{-1})} - \frac{p^{-\frac{1}{h} \left( \left\lfloor \frac{\log x}{\log p} \right\rfloor - h + 1\right)}}{p(1-p^{-1/h}+p^{-1})}.$$
Thus, using a similar result for a prime $q$ as above and the symmetry of two primes $p$ and $q$, we deduce
\begin{align*}
    & \gamma_{0,h} x^{1/h} \sum_{\substack{p,q \\ p \neq q, \ pq \leq x^{1/h}}} \sum_{k=h}^{\left\lfloor \frac{\log x}{\log p} \right\rfloor} \sum_{l=h}^{\left\lfloor \frac{\log x}{\log q} \right\rfloor} \frac{1}{p^{k/h} \left( 1 + \frac{p^{-1}}{1 - p^{-1/h}} \right)} \frac{1}{q^{l/h}  \left( 1 + \frac{q^{-1}}{1 - q^{-1/h}} \right)} \\
    & = \gamma_{0,h} x^{1/h} \sum_{\substack{p,q \\ p \neq q, \ pq \leq x^{1/h}}} \frac{1}{p(1-p^{-1/h}+p^{-1})} \frac{1}{q(1-q^{-1/h}+q^{-1})} - 2 J_1 + J_2,
\end{align*}
where
$$J_1 = \gamma_{0,h} x^{1/h} \sum_{\substack{p,q \\ p \neq q, \ pq \leq x^{1/h}}} \frac{1}{p(1-p^{-1/h}+p^{-1})}   \frac{q^{-\frac{1}{h} \left( \left\lfloor \frac{\log x}{\log q} \right\rfloor - h + 1\right)}}{q(1-q^{-1/h}+q^{-1})},$$
and
$$J_2 = \gamma_{0,h} x^{1/h} \sum_{\substack{p,q \\ p \neq q, \ pq \leq x^{1/h}}} \frac{p^{-\frac{1}{h} \left(  \left\lfloor \frac{\log x}{\log p} \right\rfloor - h + 1\right)}}{p(1-p^{-1/h}+p^{-1})} \frac{q^{-\frac{1}{h} \left( \left\lfloor \frac{\log x}{\log q} \right\rfloor  - h + 1\right)}}{q(1-q^{-1/h}+q^{-1})}.$$
Using $\lfloor x \rfloor \geq x-1$, \eqref{PNT} and \lmaref{sumplogp}, we obtain
\begin{align*}
    J_1 & \ll_h  \sum_{\substack{p \\ p \leq x^{1/h}/2}} \frac{1}{p(1-p^{-1/h}+p^{-1})} \left( \sum_{\substack{q \\ q \leq x^{1/h}/p}} 1 \right)
    \ll_h \frac{x^{1/h} \log \log x}{\log x}.
\end{align*}
Similarly, we obtain
\begin{align*}
    J_2 & \ll_h \sum_{\substack{p \\ p \leq x^{1/h}/2}} \frac{p^{-\frac{1}{h} \left( \left\lfloor \frac{\log x}{\log p} \right\rfloor - h + 1\right)}}{p (1-p^{-1/h}+p^{-1})} \left( \sum_{\substack{q \\ q \leq x^{1/h}/p}} 1 \right) 
    \ll_h \frac{\log \log x}{\log x}.
\end{align*}
Combining the last three results, we obtain
\begin{align}\label{hfullp4}
    & \gamma_{0,h} x^{1/h} \sum_{\substack{p,q \\ p \neq q, \ pq \leq x^{1/h}}} \sum_{k=h}^{\left\lfloor \frac{\log x}{\log p} \right\rfloor} \sum_{l=h}^{\left\lfloor \frac{\log x}{\log q} \right\rfloor} \frac{1}{p^{k/h} \left( 1 + \frac{p^{-1}}{1 - p^{-1/h}} \right)} \frac{1}{q^{l/h}  \left( 1 + \frac{q^{-1}}{1 - q^{-1/h}} \right)} \notag \\
    & = \gamma_{0,h} x^{1/h} \sum_{\substack{p,q \\ p \neq q, \ pq \leq x^{1/h}}} \frac{1}{p(1-p^{-1/h}+p^{-1})} \frac{1}{q(1-q^{-1/h}+q^{-1})} + O_h \left( \frac{x^{1/h} \log \log x}{\log x} \right).
\end{align}
Thus to complete the proof, we only require to estimate the main term in \eqref{hfullp4}. Using \lmaref{primepower}, we have
\begin{align}\label{hfullp5}
    & \sum_{\substack{p, q \\ p \neq q, \ pq \leq x^{1/h}}} \frac{1}{p(1-p^{-1/h}+p^{-1})} \frac{1}{q(1-q^{-1/h}+q^{-1})} \notag \\
    & = \sum_{\substack{p,q \\ pq \leq x^{1/h}}} \frac{1}{p(1-p^{-1/h}+p^{-1})} \frac{1}{q(1-q^{-1/h}+q^{-1})} - \sum_{p} \left( \frac{1}{p-p^{1-1/h}+1} \right)^2 \notag \\
    & \hspace{.5cm} + O_h \left( \frac{1}{x^{1/(2h)} \log x} \right).
\end{align}
Now, using
$$\frac{1}{p(1-p^{-1/h}+p^{-1})} = \frac{1}{p} + \frac{p^{-1/h} - p^{-1}}{p(1-p^{-1/h}+p^{-1})},$$
a similar result for another prime $q$, and the symmetry of primes $p$ and $q$, we write the first sum on the right-hand side of \eqref{hfullp5} as
\begin{align*}
    & \sum_{\substack{p,q \\ pq \leq x^{1/h}}} \frac{1}{p(1-p^{-1/h}+p^{-1})} \frac{1}{q(1-q^{-1/h}+q^{-1})} \\
    & = \sum_{\substack{p,q \\ pq \leq x^{1/h}}} \frac{1}{pq} + 2 \sum_{\substack{p,q \\ pq \leq x^{1/h}}} \frac{q^{-1/h} - q^{-1}}{p q(1-q^{-1/h}+q^{-1})} \\
    & \hspace{.5cm} + \sum_{\substack{p,q \\ pq \leq x^{1/h}}} \frac{p^{-1/h} - p^{-1}}{p(1-p^{-1/h}+p^{-1})} \frac{q^{-1/h} - q^{-1}}{q(1-q^{-1/h}+q^{-1})}.
\end{align*}
The first sum on the right-hand side above is estimate using \lmaref{saidak}. For the second sum, we use \lmaref{primepower}, \lmaref{sumplogp} and \eqref{sum1/p} to obtain
\begin{align*}
    & \sum_{\substack{p,q \\ pq \leq x^{1/h}}} \frac{q^{-1/h} - q^{-1}}{p q(1-q^{-1/h}+q^{-1})} \\
    & = \sum_{\substack{p \\ p \leq x^{1/h}/2}} \frac{1}{p} \sum_{\substack{q \\ q \leq x^{1/h}/p}} \frac{q^{-1/h} - q^{-1}}{q(1-q^{-1/h}+q^{-1})} \\
    & = \left( \mathcal{L}_h(h+1) - \mathcal{L}_h(2h) \right) \left(  \log \log x + B_1 - \log h \right) + O_h \left( \frac{\log \log x}{\log x} \right),
\end{align*}
and similarly, for the third sum, we obtain 
\begin{align*}
    & \sum_{\substack{p,q \\ pq \leq x^{1/h}}} \frac{p^{-1/h} - p^{-1}}{p(1-p^{-1/h}+p^{-1})} \frac{q^{-1/h} - q^{-1}}{q(1-q^{-1/h}+q^{-1})} \\
    & =  \left( \mathcal{L}_h(h+1) - \mathcal{L}_h(2h) \right)^2 + O_h \left( \frac{\log \log x}{x^{1/h^2} \log x} \right).
\end{align*}
Combining the last three results with \lmaref{saidak}, we obtain
\begin{align*}
     & \sum_{\substack{p,q \\ pq \leq x^{1/h}}} \frac{1}{p(1-p^{-1/h}+p^{-1})} \frac{1}{q(1-q^{-1/h}+q^{-1})} \notag \\
     & = (\log \log x)^2 + 2 D_1 \log \log x + D_1^2 - \zeta(2)
     + O_h \left( \frac{\log \log x}{\log x} \right).
\end{align*}
Combining the above with \eqref{hfullp1}, \eqref{hfullp2}, \eqref{hfullp3}, \eqref{hfullp4}, and \eqref{hfullp5}, and using the first moment completes the proof for the second moment.
\end{proof}

\section{Normal order of \texorpdfstring{$\omega(n)$}{} over \texorpdfstring{{h}}{}-full numbers}
In this section, using the variance of $\omega(n)$ over the set of $h$-full numbers, we prove that $\omega(n)$ has normal order $\log \log n$ when restricted to this set.

\begin{proof}[\textbf{Proof of \thmref{normal-order-hfull}}]
Notice that
\begin{equation}\label{normaleq2}
   \sum_{n \in \mathcal{N}_h(x)} (\omega(n) - \log \log n) ^2 = \sum_{n \in \mathcal{N}_h(x)} \omega^2(n) - 2 \sum_{n \in \mathcal{N}_h(x)} \omega(n) \log \log n + \sum_{n \in \mathcal{N}_h(x)} (\log \log n)^2.
\end{equation}
It is also well-known that (see \cite[Lemma 1]{is}) \begin{equation}\label{nhxbound}
    |\mathcal{N}_h(x)| = \gamma_{0,h} x^{1/h} + O_h(x^{1/(h+1)}),
\end{equation}
where $\gamma_{0,h}$ is defined in \eqref{gamma0h}. Using the above with the first moment of $\omega(n)$ over $h$-full numbers given in \thmref{hfullomega} and the partial summation formula, we obtain
$$\sum_{n \in \mathcal{N}_h(x)} \omega(n) \log \log n =  \gamma_{0,h} x^{1/h} (\log \log x)^2 + \gamma_{0,h}
 D_1 x^{1/h}  \log \log x
+ O_h \left( \frac{x^{1/h} \log \log x}{\log x}\right),$$
and
$$\sum_{n \in \mathcal{N}_h(x)} (\log \log n)^2 = \gamma_{0,h} x^{1/h} (\log \log x)^2 + O_h \left( \frac{x^{1/h} \log \log x}{\log x}\right).$$
Using the above two results and the second moment of $\omega(n)$ over $h$-full numbers given in \thmref{hfullomega} in \eqref{normaleq2}, we obtain
$$\sum_{n \in \mathcal{N}_h(x)} (\omega(n) - \log \log n) ^2 = \gamma_{0,h} x^{1/h} \log \log x + D_2 \gamma_{0.h} x^{1/h} + O_h \left( \frac{x^{1/h} \log \log x}{\log x} \right).$$

For the second part, let $g(x)$ be an increasing function such that $g(x) \rightarrow \infty$ as $x \rightarrow \infty$, and let $E_{\text{full}}^h$ be the set of natural numbers $n \in \mathcal{N}_h$ with $x/\log x < n \leq x$ such that
$$\frac{|\omega(n) - \log \log n|}{\sqrt{\log \log n}} \geq g(x).$$
Let $|E_{\text{full}}^h|$ be the cardinality of $E_{\text{full}}^h$. Then
\begin{align*}
    \sum_{n \in \mathcal{N}_h(x)} (\omega(n) - \log \log n) ^2 
    & > g^2(x/\log x) |E_{\text{full}}^h| \log \log (x/\log x).
\end{align*}
Using the asymptotic result for the left-hand side above, we deduce
$$\frac{|E_{\text{full}}^h|}{x^{1/h}} \ll_h \frac{\log \log x}{g^2(x/\log x) \log \log (x/\log x)}.$$
Since $g(x) \rightarrow \infty$ as $x \rightarrow \infty$, the right-hand side above goes to 0 as $x \rightarrow \infty$. Thus, $|E_{\text{full}}^h| = o(x^{1/h})$. Note that, by \eqref{nhxbound},  $|\mathcal{N}_h(x)| \sim \gamma_{0,h} x^{1/h}$. Thus the set of natural numbers $n \in \mathcal{N}_h$ with $x/ \log x < n \leq x$ such that
$$\frac{|\omega(n) - \log \log n|}{\sqrt{\log \log n}} \geq g(x),$$
is $o(|\mathcal{N}_h(x)|)$. Again, by \eqref{nhxbound}, the remaining set of natural numbers $n \in \mathcal{N}_h(x / \log x)$ is $o(|\mathcal{N}_h(x)|)$. Together, this proves the first part of the theorem. Next, we choose $\epsilon'$ such that $0 < \epsilon' < 1/2$ and choose the function $g(x) = (\log \log x)^{\epsilon'}$. Let $\epsilon > 0$ be arbitrary. Since $\epsilon' < 1/2$, there exists $x_0 \in \mathbb{R}$ such that for all $x \geq x_0$ and for all $n$ with $x/\log x < n \leq x$, 
$$\frac{(\log \log x)^{\epsilon'}}{(\log \log n)^{1/2}} < \frac{(\log \log x)^{\epsilon'}}{(\log \log (x/\log x))^{1/2}} \leq \epsilon.$$ 
Hence as $x \rightarrow \infty$, the numbers $n \in \mathcal{N}_h$ with $x/\log x \leq n \leq x$ that satisfy the inequality
$$\frac{|\omega(n) - \log \log n|}{\log \log n} \geq \epsilon$$
is $o(|\mathcal{N}_h(x)|)$. This together with the fact that the remaining set of natural numbers $n \in \mathcal{N}_h(x / \log x)$ is $o(|\mathcal{N}_h(x)|)$ implies that $\omega(n)$ has normal order $\log \log n$ over $h$-full numbers.   
\end{proof}

In this work, we employ a counting argument to establish that $\omega(n)$ has normal order $\log \log n$ over $h$-free and over $h$-full numbers. In addition, we can also establish that $\omega(n)$ satisfies the Gaussian distribution over the subsets of $h$-free and $h$-full numbers. We will report this result in a follow-up article. The function field analog of this research has been studied by Lal\'in and Zhang \cite{lz}.  

Let $k \geq 1$ be a natural number. Let $\omega_k(n)$ denote the number of distinct prime factors of a natural number $n$ with multiplicity $k$. Elma and Liu \cite{el} studied the distribution of $\omega_k(n)$ over the natural numbers. They established that $\omega_1(n)$ behaves asymptotically similar to $\omega(n)$, whereas $\omega_k(n)$ with $k \geq 2$ is asymptotically smaller compared to $\omega(n)$ over naturals. Moreover, over the natural numbers, $\omega_1(n)$ has normal order $\log \log n$, and $\omega_k(n)$ with $k \geq 2$ does not have a normal order. This naturally raises the question about the behavior of $\omega_k(n)$ over any subset of natural numbers, and if they have the same normal order as $\omega(n)$ over such subsets. We will study the distribution of $\omega_k(n)$ over the particular sets of $h$-free and $h$-full numbers in a separate article and answer the questions related to their behavior. Note that the function field analog of this research has been studied by G\'omez and Lal\'in \cite{lg}.

\section{Acknowledgement}

The authors would like to thank the referees for their valuable comments and for providing an outline of a direct proof of the normal order result over $h$-full numbers using the classical case as discussed in the introduction. The authors would also like to thank Matilde Lal\'in for the helpful discussions.

\bibliographystyle{plain} 

\end{document}